\DeclareSymbolFont{cyrletters}{OT2}{wncyr}{m}{n}
\DeclareMathSymbol{\sha}{\mathalpha}{cyrletters}{"58}
\newcommand{\F}{\mathbf{F}}
\newcommand{\Z}{\mathbf{Z}}
\newcommand{\Q}{\mathbf{Q}}
\newcommand{\R}{\mathbf{R}}
\newcommand{\C}{\mathbf{C}}
\newcommand{\p}{\mathfrak{p}}
\newcommand{\bm}{\boldsymbol\mu }
\DeclareMathOperator{\Gal}{Gal}
\DeclareMathOperator{\tor}{tor}
\DeclareMathOperator{\coker}{coker}
\DeclareMathOperator{\rk}{rk}
\DeclareMathOperator{\ns}{ns}
\DeclareMathOperator{\nd}{nd}
\newtheorem{lemma}{Lemma}[section]
\newtheorem{proposition}[lemma]{Proposition}
\newtheorem{conjecture}[lemma]{Conjecture}
\theoremstyle{definition}
\newtheorem{remark}[lemma]{Remark}
\begin{document}
\title[On the density of abelian surfaces with $\# \sha = 5n^2$]{On the density of abelian surfaces with Tate- Shafarevich group of order five times a square}
\author{Stefan Keil}
\address{Institut f\"ur Mathematik, Humboldt-Universit\"at zu Berlin, Unter den Linden 6, D-10099 Berlin, Germany}
\email{keil@math.hu-berlin.de}
\author{Remke Kloosterman}
\address{Institut f\"ur Mathematik, Humboldt-Universit\"at zu Berlin, Unter den Linden 6, D-10099 Berlin, Germany}
\email{klooster@math.hu-berlin.de}
\thanks{}
\begin{abstract}
Let $A=E_1\times E_2$ be be the product of two elliptic curves over $\Q$, both 
having a rational five torsion point $P_i$. Set $B=A/\langle 
(P_1,P_2)\rangle$. In this paper we give an algorithm to decide whether 
the Tate-Shafarevich group of the abelian surface $B$ has square order or order five times a square, assuming that we can find a basis for the Mordell-Weil groups of both $E_i$, and that the Tate-Shafarevich groups of the $E_i$ are finite.

We considered all pairs $(E_1,E_2)$, with prescribed bounds on the conductor and the coefficients on a minimal Weierstrass equation. In total we considered around $20.0$ million of abelian surfaces of which $49.16\%$ have a Tate-Shafarevich group of non-square order. 
\end{abstract}
\subjclass{}
\keywords{}
\date{\today}
\thanks{The first author is supported by a scholarship from the Berlin Mathematical School (BMS). Both authors thank Tom Fisher for pointing out the method ``MordellWeilShaInformation()'' in MAGMA, and both authors thank the referees for their comments and suggestions}
\maketitle
\section{Introduction}
Let $A$ be an abelian variety over a number field $K$. Then the Tate-Shafarevich group $\sha(A/K)$ plays an important role in understanding the arithmetic of $A$. For example, it contains information on the tightness of the upper bound on the Mordell-Weil rank obtained by $m$-descent. Moreover the order of this group, which is conjectured to be finite, plays a role in the Birch and Swinnerton-Dyer conjecture.

The Tate-Shafarevich group comes with a pairing, the so-called Cassels-Tate pairing, which depends on the choice of a polarization $\lambda: A\to A^\vee$:
\[ \langle \cdot,\cdot \rangle_{\lambda}: \sha(A/K) \times \sha(A/K) \to \Q/\Z.\]
Let $\sha(A/K)_{\nd}$ denote the Tate-Shafarevich group modulo its maximal divisible subgroup. If $\lambda$ is an isomorphism, i.e., $A$ is principally polarized, then the induced pairing on $\sha(A/K)_{\nd}$ is non-degenerate. If moreover this pairing is alternating, then for all primes $p$ the cardinality of the $p$-divisible part $\sha(A/K)_{\nd}[p^\infty]$ is a perfect square, thus if $\sha(A/K)$ is finite, its order is a perfect square.

Tate \cite{Tate} showed that if $\lambda$ is an isomorphism and is also induced from a $K$-rational divisor on $A$ then the Cassels-Tate pairing is actually alternating, as for example for elliptic curves. However, if $\dim A>1$ then $A$ may not admit a principal polarization and even when $A$ is principally polarized then this polarization need not to be induced by a $K$-rational divisor on $A$. Poonen and Stoll \cite{PS} in fact showed that there exist genus $2$ curves $C$ such that $\# \sha(J(C))$ is twice a square. Moreover, they showed that if one assumes that $\sha(J(C))$ is finite for all genus 2 curves $C/\Q$ then the density of the Jacobians of genus $2$ curves that have non-square order Tate-Shafarevich groups exists, and they showed that it is about $13\%$. 

For arbitrary abelian varieties Flach \cite{Flach} showed that if $\#\sha(A/K)=kn^2$, with $k$ square free, then $k$ divides $2$ times the degree of any polarization on $A$. Hence for principally polarized abelian varieties one has that $\#\sha(A/K)$ is either a square or twice a square, if it is finite, but for general abelian varieties there are more possibilities. Stein \cite{Stein} constructed for every prime number $p<25,000$ an example of a $p-1$-dimensional abelian variety $A_p/\Q$ such that $\#\sha(A_p)=pn^2$. 

We restrict now to the case of $\dim A=2$. The constructions of Poonen-Stoll and of Stein yield examples of abelian surfaces such that $\#\sha(A/K)$ is a square, twice a square or three times a square. One might wonder which further possibilities occur. Recently, the first author \cite{Keil} showed that there exist abelian surfaces such that the Tate-Shafarevich group has order five times a square and seven times a square.

In this paper we will have a closer look on the construction of abelian surfaces with Tate-Shafarevich group of order five times a square. The examples of \cite{Keil} are members of a two dimensional family of abelian surfaces with a polarization of degree $5^2$. Moreover, one can show that for a general member of this family every polarization it possesses has degree a multiple of $5$, thus they are not a priori excluded by Flach's theorem and might have a Tate-Shafarevich group of order five times a square. 

The construction of this family goes as follows. Let $(E,O)$ be an elliptic curve over $\Q$ with a point $P$ of order $5$, then there exists a $d\in \Q^*$ such that $((E,O),P)$ is isomorphic to $((E_d,O),(0,0))$ with
\[ E_d:y+(d+1)xy+dy=x^3+dx^2.\]
Take now two numbers $d_1,d_2\in \Q^*$ and consider $B_{d_1,d_2}:=E_{d_1}\times E_{d_2}/\langle(0,0)\times (0,0) \rangle$. Then  $A_{d_1,d_2}:=E_{d_1}\times E_{d_2}\to B_{d_1,d_2}$ is an isogeny of degree 5. Moreover, if the two elliptic curves are not isogenous, then all polarization on $B_{d_1,d_2}$ have degree divisible by $5$. The $B_{d_1,d_2}$'s are the family we consider. In our case we know that $\sha(A_{d_1,d_2}/\Q)$ has square order, if it is finite, since it is isomorphic to the product of the two Tate-Shafarevich groups of $E_{d_1}$ and $E_{d_2}$. 

The behavior of the Tate-Shafarevich group under isogenies is well-known. This behavior is part of Tate's proof of the invariance of the Birch and Swinnerton-Dyer conjecture; for more on this see Section~\ref{SecPre}. The upshot of this is the following: Let $\varphi:A\to B$ be an isogeny and assume that either $\#\sha(A/K)$ or $\#\sha(B/K)$ is finite (which implies that both are finite). Denote by $\varphi^\vee:B^\vee \to A^\vee$ the dual isogeny. For a field $L\supset K$ denote by $\varphi_L:A(L)\to B(L)$ the induced map on $L$-rational points. Let $S$ be a finite set of places containing the primes where $A$ has bad reduction, the infinite places and the primes dividing the degree of $\varphi$. Then the following holds:
 \[ \frac{\# \sha(A/K)}{\#\sha(B/K)}=   \frac{\# \ker \varphi_K \#\coker \varphi_K^{\vee}}{\# \ker \varphi_K^{\vee} \#\coker \varphi_K}  \prod_{v\in S}  \frac{\#\coker \varphi_{K_v}}{\#\ker \varphi_{K_v}}. \]
In Sections~\ref{SecGlo} and~\ref{SecLoc} we show that for our choice of abelian surfaces the above mentioned cardinalities of kernels and co-kernels can be determined, provided one has a basis for the Mordell-Weil group of both $E_{d_1}$ and $E_{d_2}$. 
(Actually something weaker is enough, see end of Section~\ref{SecGlo}.) Hence, given a basis for the Mordell-Weil groups of both elliptic curves we can determine whether $\#\sha(B/\Q)$ is a square or a non-square.  

For all pairs $(d_1,d_2)$ with $d_i=u_i/v_i$, such that $\max(|u_i|,|v_i|)$ is bounded by $N=50,000$ and the conductor of $E_{d_i}$ is bounded by $C=10^6$, we computed this product of cardinalities of kernels and cokernels. There are $2,445,366$ such pairs and $47.01\%$ of these surfaces have a Tate-Shafarevich group of non-square order. Also we computed these cardinalities for all pairs $(d_1,d_2)$, such that the absolute value of the numerator and denominator of $d_i$ is bounded by $N=100$. There are $18,522,741$ of such pairs and $49.31\%$ of them have a Tate-Shafarevich group of non-square order. Based on our computations we expect that the density of abelian surfaces $B_{d_1,d_2}$ with non-square Tate-Shafarevich groups exists and is around $50\%$. For some heuristics see the end of the last section.

The outline of this paper is as follows. In Section~\ref{SecPre} we discuss some preliminaries and in Section~\ref{SecCon} we explain in more detail the construction of the considered familiy of abelian surfaces. In Section~\ref{SecGlo} we discuss how we can calculate the global quotient and which conditions on $E_{d_1}$ and $E_{d_2}$ are needed for this. In Section~\ref{SecLoc} we discuss how we calculate the local quotient, which turns out to be much easier computationally. In Section~\ref{SecAlg} we sketch the algorithm used for the computations of the densities and finally in Section~\ref{SecRes} we discuss the obtained results.

\section{Preliminaries}\label{SecPre}
Let $K$ be a number field, and let $G_K$ be the absolute Galois group $\Gal(\overline{K}/K)$. For a (finite or infinite) place $v$ of $K$ denote by $K_v$ its completion with respect to $v$ and $G_{K_v}$ its absolute Galois group.

Let $A/K$ be an abelian variety. Denote by $A^\vee$ the dual abelian variety.
Then the Tate-Shafarevich group of $A/K$ is defined as 
\[\sha(A/K) := \ker \left( H^1(G_K,A)\to \prod_v H^1(G_{K_v},A) \right),\] where the product is taken over all finite and infinite places of $K$. Let $\varphi :A\to B$ be an isogeny of abelian varieties, then the $\varphi$-Selmer group of $A/K$ is defined as
\[S^{\varphi}(A/K):=\ker \left( H^1(G_K,A[\varphi]) \to \prod_v H^1(G_{K_v},A) \right).\]
Here $[\cdot]$ means ``kernel of''.

The Tate-Shafarevich group is torsion. It is conjectured to be finite and the $\varphi$-Selmer group is known to be finite. The $m$-torsion subgroup of the Tate-Shafarevich group fits in an exact sequence
\[ 0\to A(K)/mA(K) \to S^{[m]}(A/K) \to \sha(A/K)[m]\to 0. \]
I.e., it measures the difference between the $m$-Selmer group and $A(K)/mA(K) $. In theory the $m$-Selmer group is computable, hence the Tate-Shafarevich group measures the difference between the upper bound on the Mordell-Weil rank obtained by doing $m$-descent and the actual Mordell-Weil rank of $A$.

The Tate-Shafarevich group plays also a role in the Birch and Swinnerton-Dyer conjecture:
\begin{conjecture}[Birch and Swinnteron-Dyer] Let $A/K$ be an abelian variety and $L(A,s)$ its $L$-series. Set $r:=\rk A(K)$. Then $\sha(A/K)$ is finite, $L(A,s)$ has a zero of exact order $r$ at $s=1$, and
\[ \lim_{s\to 1} \frac{L(A,s)}{(s-1)^r} = \frac{2^r \#\sha(A/K) R_A \prod\int_{A(K_v)} |\omega|_v}{\#A(K)_{\tor} \#A^{\vee}(K)_{\tor}}. \]
\end{conjecture}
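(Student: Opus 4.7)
The statement is the full Birch and Swinnerton-Dyer conjecture for an arbitrary abelian variety over a number field, one of the seven Clay Millennium Problems, and a complete proof is out of reach of current techniques. Any realistic ``plan'' is therefore a strategy for assembling the partial results that are known rather than a clean chain of steps; the sketch below indicates which ingredients I would collect and where the unknown pieces sit. Since the excerpt introduces the statement as a conjecture and uses it only as motivation, a proof in the usual sense is not expected.

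The plan is to split the statement into three assertions and treat each in the cases where tools exist. First, finiteness of $\sha(A/K)$: I would begin with modular elliptic curves over $\Q$, where Kolyvagin's Euler system of Heegner points, combined with Gross--Zagier, yields finiteness of $\sha$ whenever the analytic rank is at most $1$; this extends to modular abelian varieties of $\operatorname{GL}_2$-type via work of Kato, Nekov\'a\v{r}, and others. Second, the identity $\operatorname{ord}_{s=1} L(A,s)=r$: for elliptic curves of analytic rank $\leq 1$ this is again Gross--Zagier--Kolyvagin, while for higher analytic rank no general proof is known in either direction. Third, the leading-coefficient formula: once finiteness and the rank statement are in hand, I would try to match the $p$-parts of both sides using $p$-adic $L$-functions and the Iwasawa main conjecture (Mazur--Wiles, Kato, Skinner--Urban), exploiting the compatibility of the Cassels--Tate pairing with local Tamagawa numbers and with N\'eron--Tate heights through the regulator $R_A$.

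The main obstacle, and the reason a uniform proof is unavailable, is twofold. Analytically, there is no general method producing the lower bound $\operatorname{ord}_{s=1} L(A,s)\geq r$ when $r\geq 2$, and the matching upper bound coming from Selmer groups is equally blocked; any realistic attempt stops at the boundary of current Euler-system technology. Algebraically, isolating the exact $\sha$ contribution up to the rational constant predicted by BSD requires matching Tamagawa numbers, torsion orders, and regulator contributions prime by prime; for abelian varieties of dimension greater than one one must also handle polarizations of degree greater than one, which, as Flach's theorem quoted in the introduction already hints, can feed extra factors into the $\sha$-part of the ratio. The one concrete ingredient I could write down cleanly is the invariance of the BSD quotient under isogeny (Tate's theorem), which is precisely the tool the rest of the paper exploits; beyond that, a proof ``proposal'' reduces to pointing to the open literature and flagging the remaining gaps.
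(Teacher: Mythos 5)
You correctly identify that this statement is the Birch and Swinnerton-Dyer conjecture, which the paper presents as a conjecture with no proof and uses only as motivation (what it actually relies on is the isogeny invariance of the BSD quotient, due to Cassels and Tate, which you also single out). Your survey of the known partial results is accurate, and declining to offer a proof is the right call here; there is nothing in the paper to compare against.
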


The left hand side of this conjecture is invariant under isogeny. Cassels \cite{Cassels} ($\dim A=1$) and Tate \cite{Tate} ($\dim A\geq 1$) proved that the right hand side is also invariant under isogeny. I.e., if $\varphi:A \to B$ is an isogeny then
\[ \frac{\#\sha(A/K)}{\# \sha(B/K)} = \frac{R_B \#A(K)_{\tor} \#A^\vee(K)_{\tor} \prod\int_{B(K_v)}  |\omega|_v}{R_A \#B(K)_{\tor} \#B^\vee(K)_{\tor} \prod\int_{A(K_v)} |\omega|_v }.\]
This formula was used by Schaefer and the second author \cite{KloSche} to provide examples of elliptic curves with large Selmer groups, by Matsuno \cite{Matsuno} and by the second author \cite{KloSha} to provide examples of elliptic curves with large Tate-Shafarevich groups and by Flynn and Grattoni \cite{FlyGra} to compute several Selmer groups.

However, for calculation purposes the right hand side is not suitable. One can rewrite the right hand side as follows: For a field $L\supset K$ let $\varphi_L$ denote the group homomorphism $\varphi_L:A(L)\to B(L)$. Then
 \[ \frac{\# \sha(A/K)}{\#\sha(B/K)}=   \frac{\# \ker \varphi_K \#\coker \varphi_K^{\vee}}{\# \ker \varphi_K^{\vee} \#\coker \varphi_K}  \prod_{v}  \frac{\#\coker \varphi_{K_v}}{\#\ker \varphi_{K_v}} .\]
We will call the first factor with the $\varphi_K$ the {\em global factor}, the second factor with the $\varphi_{K_v}$ the {\em local factor}. If $v$ is a finite prime of good reduction and $v$ does not divides the degree of the isogeny then $\#\coker \varphi_{K_v}=\#\ker \varphi_{K_v}$, hence the product on the right hand side is a finite product, where only the bad primes, the infinite primes and the primes dividing the degree of the isogeny are taken into account.

It is known that if the analytic rank of an elliptic curve is at most 1, then its Tate-Shafarevich group is finite and the analytic rank equals the Mordell-Weil rank; otherwise we will assume these two conjectures.

\section{Constructing a family of abelian surfaces}\label{SecCon}
We will construct a two-dimensional family of abelian surfaces $B/K$, whose members are quotients of products of two elliptic curves $E_1,E_2$ by an isogeny of degree $5$. Therefore $\# \sha(B/K) \cdot 5^a=\# \sha(E_1\times E_2)$, for an $a \in \Z$. Since $\# \sha(E_1\times E_2)$ is a square it follows that $\# \sha(B/K)$ modulo squares is one of $\{1,5\}$. Additionally we have that for a general member of this family every polarization has degree divisible by $5$. Thus Flach's theorem does not restrict us further.

Let $G/K$ be a group scheme of prime order $\ell$. Let $E_1,E_2/K$ be two elliptic curves such that $G$ is a subgroup scheme of both $E_1$ and $E_2$. Let $A=E_1\times E_2$ and $B=A/G$. Then $\varphi:A\to B$ has degree $\ell$. Moreover, one can show that either $E_1$ and $E_2$ are isogenous or every polarization on $B$ has degree a multiple of $\ell$. Hence for general $E_1,E_2$ we are in the second case.

Consider the case $G=\Z/\ell\Z$, i.e., $G$ is generated by a $K$-rational point. Since for $\ell>4$ the functor $Y_1(\ell)$ is representable one has a universal family of elliptic curves $E$ with a point $P$ of order $\ell$. In the case $\ell=5$ the universal family is given by 
\[E_d: y^2+(d+1)xy+dy=x^3+dx^2,\ P=(0,0),\]
for any $d\in K^*$ with $d^2+11d-1\neq 0$. The four non-trivial $5$-torsion points are $(0,0), (-d, d^2), (-d, 0), (0,-d)$.  If we move $(0,-d)$ to $(0,0)$ and bring the curve in standard form we obtain $E_d$. If we move $(-d,d^2)$ or $(-d,0)$ to $(0,0)$ and bring the elliptic curve in standard form then we obtain $E_{-1/d}$.

We restrict now to the case $K=\Q$, $\ell = 5$, and $G$ is generated by a $\Q$-rational point. Fix $d_1$ and $d_2$ in $\Q^*$, set $A:=E_{d_1}\times E_{d_2}$. The rational $5$-torsion subgroup of $A$ has four diagonally embedded subgroups of order $5$. Let $G=\Z/5\Z$ be one of those, i.e., it is the subscheme of $A$ generated by $(0,0)\times [n](0,0)$, with $n\in \{1,2,3,4\}$. Let $B:=A/G$. Then $B$ is a candidate for an abelian surface such that $\sha(B/\Q)$ has order five times a square. To actually check whether $\sha(B/\Q)$ has non-square order we will now calculate both the local and the global factor. 

Note, that the $16$ surfaces $B/\Q$ one obtains by replacing $d_i$ by $-1/d_i$ and using the four values of $n$ break into two pairs of $8$ isomorphic surfaces. For fixed $d_1,d_2$ the surfaces corresponding to $n=1,4$ lie in one of these isomorphism classes and those for $n=2,3$ in the other one. We will see in the next two sections that for fixed $d_1,d_2$ the size of $\sha(B/\Q)$ is independent of $n$, thus all $16$ surfaces will have Tate-Shafarevich groups of same cardinality. Therefore, for the computations we will only consider the case $d_1,d_2>0$ and $n=1$.

Let $A'$ be the quotient of $E_{d_1}\times E_{d_2}$ by $\langle (0,0)\times O,O\times (0,0)\rangle$ and $E'_{d_i}$ be the quotient of $E_{d_i}$ by $\langle (0,0)\rangle$. The isogeny $A\to A'$ factors as $A\to B \to A'$. Consider now the dual picture
\[ (A')^\vee \to B^\vee \to A^\vee.\]
Since $A$ and $A'$ are products of elliptic curves, they are principally polarized. Therefore we have the following factorization
\[ A'\to B^\vee \to A.\]
The kernel of $A'\to A$ is Cartier dual to the kernel of $A\to A'$, and hence is isomorphic to $(\bm_5)^2$. The kernel of $A'\to B^\vee$ is isomorphic to $\bm_5$ embedded with $(1,-n)$ in $(\bm_5)^2$. %\textbf{(Check!)}

Summarizing we have the following diagram:
\[ \xymatrix{
&B\ar[rd] ^\psi\\
A=E_{d_1}\times E_{d_2} \ar[ur]^\varphi  \ar@/^1pc/[rr]^{\rho=\eta_1\times \eta_2}&   &A'=E_{d_1}'\times E_{d_2}'\ar[ld]^{\psi^\vee}  \ar@/^1pc/[ll]^{\rho^\vee}\\
& B^\vee \ar[lu]^{\varphi^\vee}
}\]

\begin{lemma} Suppose $L=\Q$. Then $\ker \varphi_{\Q}\cong \Z/5\Z$ and $\ker \varphi^{\vee}_{\Q}=0$.
\end{lemma}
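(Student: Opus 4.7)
The plan is to read off each kernel from the diagram and then take $\Q$-rational points.

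For $\ker\varphi_{\Q}$, by construction $\varphi\colon A\to B=A/G$ is the quotient by the group scheme $G=\langle(0,0)\times[n](0,0)\rangle$, so $\ker\varphi\cong\Z/5\Z$ as a (constant) group scheme, generated by a $\Q$-rational point. Taking $\Q$-points gives $\ker\varphi_{\Q}\cong\Z/5\Z$ directly.

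For $\ker\varphi^{\vee}_{\Q}$ the idea is to use the diagram and Cartier duality. The isogeny $\rho=\eta_1\times\eta_2\colon A\to A'$ has kernel $\langle(0,0)\rangle\times\langle(0,0)\rangle\cong(\Z/5\Z)^{2}$, a constant group scheme, so its dual $\rho^{\vee}\colon A'\to A$ (under the principal polarizations) has kernel $(\bm_5)^{2}$. The excerpt identifies $\ker(\psi^{\vee}\colon A'\to B^{\vee})$ as the subscheme $\bm_5$ embedded by $(1,-n)$ in $(\bm_5)^{2}$. Since $\rho^{\vee}=\varphi^{\vee}\circ\psi^{\vee}$, I obtain
\[
\ker\varphi^{\vee}\;\cong\;\ker\rho^{\vee}/\ker\psi^{\vee}\;\cong\;(\bm_5)^{2}/\bm_5\;\cong\;\bm_5
\]
as a group scheme over $\Q$.

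Finally I pass to $\Q$-points: $\bm_5(\Q)$ consists of the $5$-th roots of unity in $\Q$, and since $\Q$ contains no primitive $5$-th root of unity this group is trivial. Hence $\ker\varphi^{\vee}_{\Q}=0$, as claimed.

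The only real content is the identification of $\ker\varphi^{\vee}$ with $\bm_5$; once that Cartier-duality computation is granted, the conclusion for $\Q$-points is immediate. The mild subtlety to be careful about is keeping track of which principal polarizations are used to identify $A^{\vee}$ with $A$ and $(A')^{\vee}$ with $A'$, so that the factorization $\rho^{\vee}=\varphi^{\vee}\circ\psi^{\vee}$ really matches the given diagram; this is the step I would verify most carefully before invoking the quotient $(\bm_5)^2/\bm_5\cong\bm_5$.
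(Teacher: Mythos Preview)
Your argument is correct. The paper's proof is shorter and slightly more direct: it simply invokes Cartier duality for the single isogeny $\varphi$, noting that since $A[\varphi]=\Z/5\Z$ the kernel of the dual isogeny $\varphi^{\vee}$ is the Cartier dual $\bm_5$, and then takes $\Q$-points. You instead route the same Cartier-duality fact through the ambient isogeny $\rho$, identifying $\ker\rho^{\vee}\cong(\bm_5)^{2}$ and $\ker\psi^{\vee}\cong\bm_5$ from the setup, and then passing to the quotient $\ker\varphi^{\vee}\cong\ker\rho^{\vee}/\ker\psi^{\vee}\cong\bm_5$. This is a valid detour (your surjectivity step $\psi^{\vee}(\ker\rho^{\vee})=\ker\varphi^{\vee}$ holds because $\psi^{\vee}$ is an isogeny), but it reproves what Cartier duality gives in one stroke; the direct approach avoids having to track the diagonal embedding and the polarization bookkeeping you flag at the end.
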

\begin{proof} Since $A[\varphi]=\Z/5\Z$ it follows that $A'[\varphi^\vee]=\bm_5$. Taking $\Q$-rational points yields the lemma.
\end{proof}

\begin{lemma} Suppose $L=\R$. Then $\ker \varphi_{\R}\cong \Z/5\Z$ and $\coker \varphi_{\R}= 0$.
\end{lemma}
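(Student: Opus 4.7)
The plan is to extract both claims from the long exact sequence in Galois cohomology attached to the short exact sequence of group schemes
\[ 0 \to A[\varphi] \to A \xrightarrow{\varphi} B \to 0 \]
evaluated over $\R$. The preliminary observation, already implicit in the construction of $\varphi$, is that $A[\varphi]$ is the constant $\Q$-group scheme $\Z/5\Z$, being generated by the $\Q$-rational point $(0,0)\times[n](0,0)$. In particular, $G_\R$ acts trivially on $A[\varphi](\overline{\R}) \cong \Z/5\Z$.

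First, $\ker \varphi_\R$ is by definition $A[\varphi](\R)$, and since the Galois action is trivial this equals the full $\Z/5\Z$, giving the first assertion. Second, the same long exact sequence yields an injection
\[ \coker \varphi_\R \hookrightarrow H^1(G_\R, A[\varphi]). \]
Since $G_\R \cong \Z/2\Z$ acts trivially on $\Z/5\Z$, this cohomology group equals $\operatorname{Hom}(\Z/2\Z, \Z/5\Z)$, which vanishes because $\gcd(2,5)=1$. Hence $\coker \varphi_\R = 0$.

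No serious obstacle is anticipated: the entire argument rests on the twin elementary facts that $A[\varphi]$ is $\Q$-constant and that $\deg \varphi = 5$ is coprime to $\#G_\R = 2$. I expect the same style of cohomological vanishing to dispose of the local factor at every finite prime of good reduction not dividing $5$, which is precisely why the product in the isogeny formula for $\#\sha$ collapses to a finite one and only the bad primes and the prime $5$ need serious local analysis.
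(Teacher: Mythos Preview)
Your proof of the lemma is correct. The paper argues more elementarily: for any $b\in B(\R)$ the fiber $\varphi_\C^{-1}(b)\subset A(\C)$ has five elements and complex conjugation acts on it by an involution, so by parity there is a fixed point, i.e.\ a real preimage. Your approach replaces this orbit count by the long exact sequence and the vanishing $H^1(G_\R,\Z/5\Z)=\operatorname{Hom}(\Z/2\Z,\Z/5\Z)=0$. Both arguments rest on exactly the same two facts (the kernel is $\Q$-rational and $\gcd(2,5)=1$); yours is the cohomological packaging, the paper's the set-theoretic one.

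One caution about your closing remark: the same cohomological vanishing does \emph{not} handle finite primes of good reduction away from $5$. For such $p$ the group $H^1(G_{\Q_p},\Z/5\Z)$ is nonzero (of order $5$ or $25$ depending on $p\bmod 5$), and indeed $\coker\varphi_{\Q_p}$ is typically nonzero; what is true is rather that $\#\coker\varphi_{\Q_p}=\#\ker\varphi_{\Q_p}$, so the \emph{ratio} is $1$. That equality comes from a different mechanism (e.g.\ \cite[Lemma~3.8]{schaefer} or an Euler-characteristic/local-duality argument), not from vanishing of $H^1$.
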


\begin{proof}
The first assertion is automatic. The non-trivial element in $\Gal (\C / \R)$ acts on the fiber of an element of $B(\R)$ under $\varphi_\C$ either by swaping elements or fixing them. Since the degree of $\varphi$ is not divisible by $2$ at least one element in the fiber is fixed, hence lies in $A(\R)$.
\end{proof}

Let $S$ be the set of primes where $A$ has bad reduction, together with $5$.
  Using the above lemmas it follows that
 \[ \frac{\# \sha(A/\Q)}{\#\sha(B/\Q)}= \frac{\#\coker \varphi_{\Q}^{\vee}}{\# \coker \varphi_{\Q}}  \prod_{v\in S}  \frac{\#\coker \varphi_{\Q_v}}{\#\ker \varphi_{\Q_v}} . \]
 In the next two sections we will first explain how to determine the first factor, the {\em global factor}, then how to determine the second factor, the {\em local factor}.

\section{Determining the global factor}\label{SecGlo}
To determine  $ \frac{\#\coker \varphi^{\vee}_\Q}{\# \coker \varphi_\Q}$ we assume for the moment that one has a basis for the Mordell-Weil groups $E_{d_1}(\Q), E_{d_2}(\Q),E'_{d_1}(\Q)$ and $E'_{d_2}(\Q)$. 
We will now explain how one can determine $\coker \varphi_\Q$ and $\coker \varphi^\vee_\Q$ from this.

Using $\rho^\vee=\varphi^\vee \circ \psi^\vee$ we obtain a surjective homomorphism $\coker \rho^\vee_\Q \to \coker \varphi^\vee_\Q$. With Hilbert's Theorem 90 we obtain 
\[H^1(G_\Q,A'[\rho^\vee])=H^1(G_\Q,\bm_5^2) = (\Q^*/\Q^{*5})^2, \]
\[H^1(G_\Q,B^\vee[\varphi^\vee])=H^1(G_\Q,\bm_5)=\Q^*/\Q^{*5}.\] 
The surjection $\coker \rho^\vee_\Q \to \coker \varphi^\vee_\Q$ becomes $(x,y)\mapsto x^n/y$ as the map from $(\Q^*/\Q^{*5})^2$ to $\Q^*/\Q^{*5}$. 
One sees immediately that the image of this map is independent of $n$, hence to compute $\coker \varphi^\vee_\Q$ we may set $n=1$. In order to determine $\coker \varphi^\vee_\Q$ it suffices to determine a basis for both $\coker \eta^\vee_{i,\Q}$ in $\Q^*/\Q^{*5}$. This can be done quite easily following \cite[Exercise 10.1]{SilvermanI}: Suppose that $f$ is a function on $E_{d_i}$ with divisor $5(0,0)-5O$. Then there exists a unique constant $c \in \Q^*/\Q^{*5}$ such that the map
\[ \coker \eta^\vee_{i,\Q} \to \Q^*/\Q^{*5},\]
sending $P \not = (0,0),O$  to $cf(P) \bmod \Q^{*5}$, is a well-defined and injective group homomorphism and its image agrees with the image of the natural embedding of $\coker \eta^\vee_{i,\Q}$ into $H^1(G_\Q,E'_{d_i}[\eta_i^\vee]) \cong \Q^*/\Q^{*5}$. In our case we can take the function $f= -x^2+y+xy$ and the constant $c=1$. The point $(0,0)$ is mapped to $d^{-1}$ and $O$ to $1$ by linearity.

An element of $\Q^*/\Q^{*5}$ is determined by the valuations at each prime. Write now $d=u/v$ and let $S$ be the set of all primes $p$ dividing five times the minimal discriminant of $E_{d}$, i.e.,  $p\mid 5uv(u^2+11uv-v^2)$. Define $$\Q(S,5):=\{x \in \Q^*/\Q^{*5} \mid \ v_p(x)\equiv 0 \bmod 5, \ \forall p \notin S\}.$$
From the same exercise from \cite{SilvermanI} it follows that $f(\coker \eta^\vee_\Q) \subset \Q(S,5)$. Hence we can represent an element of $\coker \eta^\vee_{\Q}$ by its valuation at each prime number $p\in S$. Once the cokernels of both $\eta^\vee_{i,\Q}$ are established, the cokernel of $\varphi^\vee_\Q$ can be computed easily.

To determine the cokernel of $\varphi_\Q$ we use the following exact sequence
\[ 0 \to \ker (\psi_\Q)/\varphi(\ker \rho_\Q)\to \coker \varphi_\Q \stackrel{\psi}{\to} \coker \rho_\Q \to \coker \psi_\Q \to 0.\]
Note that  $\ker(\psi_\Q)=\varphi(\ker \rho_\Q)$. Set $K:=\Q(\zeta_5)$, where $\zeta_5$ is a primitive fifth root of unity. Then the restriction map $H^1(G_\Q,\Z/5\Z) \to H^1(G_K,\Z/5\Z)$ is injective since the kernel of this map has exponent dividing both $[K:\Q]=4$ and $\# \Z/5\Z$. Since $A[\varphi], A[\rho]$ and $B[\psi]$ are isomorphic to $\bm_5$, $\bm_5 \times \bm_5$ and $\bm_5$ over $K$ we obtain the following commutative diagram with embeddings as vertical maps.
\[\xymatrix{ 0 \ar[r]& \coker  \varphi_\Q \ar[r]^{\psi}\ar[d]& \coker \rho_\Q \ar[r]\ar[d]& \coker \psi_\Q \ar[r]\ar[d] &0\\
 0 \ar[r]& K^*/K^{*5} \ar[r] & (K^*/K^{*5} )^2 \ar[r]&K^*/K^{*5} \ar[r] &0
 }
\]
As above the lower second horizontal map is just $(x,y)\mapsto x^n/y$. Hence to determine $\coker \varphi_\Q$ it suffices to determine the kernel of $x^n/y$ on $\coker \eta_{1,\Q} \times \coker \eta_{2,\Q} \to \coker \psi_\Q$. Again this is independent of $n$. We do this as follows:

\begin{enumerate}
\item For some $\tilde d \in K$ there is a $K$-isomorphism $\tau: E'_{d} \to E_{\tilde d}$, sending a generator of $\ker \eta^\vee$ to $(0,0)$. The map $f:E'_{d} \to K^*/K^{*5}$ is then $P\mapsto -x(\tau(P))^2+y(\tau(P))+x(\tau(P))y(\tau(P))$. Hence we have to determine $\tau$. This can be done easily for each individual curve $E'_{d}$.
\item To represent elements in $\coker \eta_{\Q} \subset K^*/K^{*5}$ note that the class number of $K^*$ equals $1$. Set
$$K(S,5):=\{x \in K^*/K^{*5} \mid \ v_\p(x)\equiv 0 \bmod 5, \ \forall \p \notin S\},$$
where $S$ contains all primes $\p$ of $K$ being a bad prime of $E_{d}$ or dividing $5$, i.e., all primes $\p$ of $K$ lying over a primes $p$, such that $p\mid 5uv(u^2+11uv-v^2)$. From \cite[Exercise 10.9]{SilvermanI} it follows that $f(\coker \eta_\Q) \subset K(S,5)$. Hence to represent elements in $\coker \eta_{\Q}$ we have to fix a generator $t_\p$ for each prime $\p \in S$, and we have to fix generators for the unit group of $K$ modulo fifth powers. The field $K$ is well-known and it is easy to see that the unit group is generated by $-\zeta_5$ and $(1+\zeta_5)$.
Hence we can write 
$$f(P)\equiv \zeta_5^{a_0}(1+\zeta_5)^{a_1}\prod_{\p\in S} t_\p^{v_\p(f(P))}$$ 
modulo fifth powers.
\end{enumerate}

\begin{remark}
We can weaken the assumption of having a basis for the Mordell-Weil groups $E_{d_1}(\Q), E_{d_2}(\Q),E'_{d_1}(\Q)$ and $E'_{d_2}(\Q)$. It is actually sufficient to just have generators of a finite index sublattice of these four groups, such that the index is not divisible by $5$, i.e., the generators of infinite order are not divisible by $5$ modulo torsion. This is the case, since the images of such sublattices in the co-kernels of $\eta_i^\vee$, respectively $\eta_i$, are the complete co-kernels. Also it is sufficient to just know such sublattices of $E_{d_1}(\Q)$ and $E_{d_2}(\Q)$, since suitable dual sublattices can be easily computed using the isogenies $\eta_i$. One only has to calculate the images of the generators under $\eta_i$ and then check if their span contains points divisible by $5$ modulo torsion.
\end{remark}

\section{Determining the local factor}\label{SecLoc}

We want to calculate   $\frac{\#\coker \varphi_{\Q_p}}{\#\ker \varphi_{\Q_p}}$ for all bad primes $p$ and for $p= \deg \varphi = 5$. Since the kernel of $\varphi_{\Q_p}$ is generated by a $\Q$-rational point it follows that $\#\ker \varphi_{\Q_p}=5$. The size of the co-kernel of $\varphi_{\Q_p}$ depends on the reduction of $E_{d_1}$ and $E_{d_2}$, but turns out to be independent of $n$.

For $\eta:=\eta_i$, we first describe how $\coker \eta_{\Q_p}$ depends on the reduction type of $E:=E_{d_i}$. Write $d_i=:u/v$ with $u,v \in \Z$ and $\gcd(u,v)=1$. Then $E$ has the following global minimal equation
\[ E: y^2+(u+v)xy+uvy=x^3+uv^2x^2\]
and discriminant $-(uv)^5(u^2+11uv-v^2)$. 

\begin{lemma} $E$ has the following reduction type at a prime $p$:
 \begin{enumerate}
  \item If $p\mid uv$ then the reduction is split multiplicative and the point $(0,0)$ does not lie on the identity component of the N\'eron model of $E$.
\item If $p\mid u^2+11uv-v^2$ then $(0,0)$ lies on the identity component of the N\'eron model of $E$ and either $p=5$, or $p\equiv \pm 1 \bmod 5$ holds. If $p=5$ the reduction is additive, if $p\equiv 1 \bmod 5$ then the reduction is split multiplicative, and if $p\equiv 4\bmod 5$ then the reduction type is non-split multiplicative. 
 \end{enumerate}
\end{lemma}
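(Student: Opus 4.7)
The plan is to reduce the minimal Weierstrass equation of $E$ modulo $p$ and to combine two standard facts: (a) a singular Weierstrass cubic has multiplicative reduction iff its tangent cone at the unique singular point splits as a product of two distinct lines, and such a reduction is split iff the two slopes lie in $\F_p$; (b) a point $P\in E(\Q_p)$ reduces to the identity component of the N\'eron model iff its image on the special fiber is a smooth point of $\bar E$, in which case it defines an element of $E^0(\F_p)=T(\F_p)$, the $\F_p$-points of the torus identity component.

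For part (1), by symmetry assume $p\mid u$, so $p\nmid v$ since $\gcd(u,v)=1$. Reducing mod $p$ kills the terms containing $u$ and leaves $\bar E:y^2+vxy=x^3$. Its unique singular point is $(0,0)$, and the tangent cone there is $y(y+vx)$, a product of two distinct lines of $\F_p$-slopes $0$ and $-v$. Hence the reduction is split multiplicative. Since $(0,0)\in E(\Q_p)$ reduces to this singular point, it is not on the smooth locus of $\bar E$, and therefore it is not on the identity component of the N\'eron model.

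For part (2), assume $p\mid u^2+11uv-v^2$ and $p\nmid uv$. The partial $\partial_yF$ evaluated at $(0,0)$ is a unit mod $p$, so $(0,0)$ is a smooth point of $\bar E$ and lies on the identity component. The hypothesis says the binary quadratic form $X^2+11XY-Y^2$, of discriminant $121+4=125$, has a nontrivial zero mod $p$; for $p\ne 2,3,5$ this forces $5$ to be a square in $\F_p^*$, hence $p\equiv\pm1\pmod 5$ by quadratic reciprocity. The primes $p=2$ and $p=3$ are ruled out by checking that $u^2+11uv-v^2$ is a unit modulo $2$ and modulo $3$ for every coprime pair $(u,v)$. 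Using the congruence $v^2\equiv u^2+11uv\pmod p$, a direct reduction of $c_4=b_2^2-24b_4$ modulo $p$ expresses $c_4$ as $5$ times a polynomial in $u,v$ that is a unit in $\F_p^*$ whenever $p\nmid u(u+11v)$; the non-vanishing of $u+11v$ in $\F_p^*$ follows because $p\mid u+11v$ combined with the defining congruence would force $p\mid v$, contradicting $p\nmid uv$. Hence $p\mid c_4$ iff $p=5$, and at $p=5$ a separate check using $u\equiv 2v\pmod 5$ shows that the tangent-cone quadratic is a perfect square, confirming additive reduction.

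For $p\ne 5$ the reduction is therefore multiplicative. The $5$-torsion point $(0,0)\in E^0(\Q_p)$ reduces to the nonzero affine point $(0,0)\in\bar E(\F_p)$, so its image in $E^0(\F_p)=T(\F_p)$ is a nontrivial $5$-torsion element, forcing $5\mid|T(\F_p)|$. Since $|T(\F_p)|=p-1$ in the split case and $p+1$ in the non-split case, the reduction is split iff $p\equiv 1\pmod 5$ and non-split iff $p\equiv 4\pmod 5$. The main obstacle is the polynomial reduction of $c_4$ modulo $u^2+11uv-v^2$ together with the parallel tangent-cone computation at $p=5$; both are mechanical substitutions in $\Z[u,v]$ that require care but involve no surprises. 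The split-vs.-non-split dichotomy, by contrast, drops out cleanly from the presence of a $5$-torsion point on the identity component, bypassing any further tangent-cone analysis.
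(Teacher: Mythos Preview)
Your proof is correct. Part (1) is handled exactly as in the paper.

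For part (2) your route diverges from the paper's. You argue in three stages: the discriminant $125$ of the binary form $u^2+11uv-v^2$ forces $p=5$ or $p\equiv\pm1\pmod 5$; a reduction of $c_4$ modulo the ideal $(u^2+11uv-v^2)$ (in fact $c_4\equiv 5u^3(u+11v)$ there, so your unit claim is right) separates additive from multiplicative; and finally the $5$-torsion point on the identity component decides split versus non-split. The paper, by contrast, gets all three conclusions from a single observation: once one knows $(0,0)$ reduces to a nontrivial smooth point, it yields a nontrivial $5$-torsion element of $\overline{E}_{\ns}(\F_p)$, whose order is $p-1$, $p+1$, or $p$ according as the reduction is split, non-split, or additive. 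The divisibility $5\mid\#\overline{E}_{\ns}(\F_p)$ then simultaneously restricts $p$ to $\{5\}\cup\{p\equiv\pm1\}$ \emph{and} matches each residue class to its reduction type, since the three possibilities are mutually exclusive. So your quadratic-reciprocity step, your $c_4$ computation, and your separate tangent-cone check at $p=5$ are all subsumed by the same torsion argument you already use at the end; the paper's version is shorter precisely because it applies that argument uniformly rather than only in the multiplicative case.
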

\begin{proof}
Let $\overline{E}$ be $E \bmod p$ and $\overline{E}_{\ns}$ be the smooth locus of $\overline{E}$. If $p\mid uv$ then $\overline{E}$ has equation $y^2+\alpha xy=x^3$, for some non-zero $\alpha \in \Z/p\Z$. In particular, $(0,0) \bmod p$ is a node of $\overline{E}$ and the tangent cone is generated by $x=-\alpha y$ and $y=0$, hence the reduction is split multiplicative. Since $(0,0)$ reduces to the singular point of $\overline{E}$ this point does  not lie on the identity component of the N\'eron model of $E$.

If $p \mid u^2+11uv-v^2$ then the reduction of $(0,0)$ is both on $\overline{E}_{\ns}$ and is non-trivial. In particular the order of the reduction of $(0,0)$, which is $5$, divides $\#\overline{E}_{\ns}(\F_p)$. If the reduction is split multiplicative then this group has order $p-1$, if the reduction is non-split then this group has order $p+1$, and if the reduction is additive then this group has order $p$, i.e., $p\equiv 1 \bmod 5$, $p\equiv -1\bmod 5$, and $p=5$ respectively.
\end{proof}

Let $E':=E'_{d_i}$ be the isogenous elliptic curve. Denote by $c_{E,p}$ and $c_{E',p}$ the local Tamagawa numbers, i.e., the number of components of the N\'eron model. 
\begin{lemma}  For the Tamagawa quotient we have
$$\frac{c_{E',p}}{c_{E,p}} = \begin{cases}
\frac{1}{5}, & \text{ if } p\mid uv,\\
5,& \text{ if } p\mid u^2+11uv-v^2 \text{ and } p\equiv 1 \bmod 5,\\
1, & \text{ otherwise}.
\end{cases}$$
\end{lemma}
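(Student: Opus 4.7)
The plan is to compute $c_{E,p}$ and $c_{E',p}$ separately in each case, using the reduction data supplied by the preceding lemma together with Tate's uniformization for the multiplicative cases and Tate's algorithm for the additive case. At primes of good reduction (those dividing none of $u$, $v$, or $u^2+11uv-v^2$) both Tamagawa numbers equal $1$, so only the bad primes and $p=5$ require attention.

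For the multiplicative primes the plan is to invoke the Tate curve: over $\overline{\Q}_p$ we have $E\cong\overline{\Q}_p^*/q^{\Z}$ with $v_p(q)=n$ equal to the Kodaira index, and any cyclic subgroup $H\subset E$ of order five is either $\mu_5\subset E^0$ or of the form $\langle\zeta q^{1/5}\rangle$, projecting nontrivially to the component group. The fifth-power map on $\overline{\Q}_p^*$ identifies the first quotient with $\overline{\Q}_p^*/q^{5\Z}$ (Kodaira type $I_{5n}$), while in the second case one reads off $E/H\cong\overline{\Q}_p^*/q^{(1/5)\Z}$ (type $I_{n/5}$). For $p\mid uv$ the previous lemma places $(0,0)$ outside $E^0$, and coprimality of $u,v$ forces $p\nmid u^2+11uv-v^2$, so $n=5v_p(uv)$ and $c_{E',p}/c_{E,p}=1/5$. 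For $p\mid u^2+11uv-v^2$ with $p\equiv 1\bmod 5$ the lemma places $(0,0)$ inside $E^0$; the same analysis applies (since $\zeta_5\in\Q_p$), and the ratio is $5$. The non-split case $p\equiv 4\bmod 5$ reduces to the split case over $\Q_{p^2}$, where $E'$ acquires type $I_{5n}$; since the inversion action on $\Z/m\Z$ has $1$ or $2$ fixed points according to the parity of $m$ and since $n$ and $5n$ share parity, the ratio is $1$.

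The main obstacle is the additive case $p=5$ with $5\mid u^2+11uv-v^2$, where Tate uniformization is unavailable and the five-isogeny is inseparable on the special fiber. Here I would run Tate's algorithm explicitly on the minimal model $y^2+(u+v)xy+uvy=x^3+uv^2x^2$ at $p=5$, using the congruence $u\equiv\pm v\bmod 5$ to locate the singular point and read off the Kodaira symbol, and then repeat the algorithm on the V\'elu model of $E'$. The key input from the previous lemma, $(0,0)\in E^0(\Q_5)$, should reduce the comparison to showing that the scheme-theoretic kernel of $\varphi$ lies inside $E^0$; granted this, the snake lemma applied to $0\to E^0\to E\to\Phi\to 0$ and its counterpart for $E'$ makes the induced map on component groups an isomorphism, forcing $c_{E,5}=c_{E',5}$ and thus ratio $1$. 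The delicate point, which I would verify by direct reduction of the given Weierstrass equation modulo $5$, is that the group-scheme closure of $\langle(0,0)\rangle$ (and not just its $\Q_5$-points) sits in the identity component of the N\'eron model.
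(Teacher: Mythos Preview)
Your Tate-curve analysis in the split multiplicative cases ($p\mid uv$, and $p\mid u^2+11uv-v^2$ with $p\equiv 1\bmod 5$) is correct and is exactly what the paper invokes (citing \cite[Proposition 2.16]{Keil}).  But for the remaining cases you are working much harder than necessary.

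The paper dispatches the non-split multiplicative case ($p\equiv 4\bmod 5$) and the additive case ($p=5$) in one line, via an observation you do not use: since $\eta$ has degree $5$, the induced maps $\Phi_E\to\Phi_{E'}$ and $\Phi_{E'}\to\Phi_E$ on component groups compose (in either order) to multiplication by $5$, so the kernel and cokernel of $\Phi_E\to\Phi_{E'}$ are $5$-torsion and hence $c_{E',p}/c_{E,p}=5^a$ for some $a\in\Z$.  For any reduction type other than split multiplicative the Tamagawa number is at most $4$, so $5^a$ with $c_{E,p},c_{E',p}\le 4$ forces $a=0$.  That is the whole argument.

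This replaces both your passage to $\Q_{p^2}$ with the parity count on $\Z/m\Z$ (which is correct, but roundabout) and your proposed Tate's-algorithm computation at $p=5$.  The latter is only a plan in your write-up, and the ``delicate point'' you flag---whether the N\'eron model of $E'$ is really the quotient of the N\'eron model of $E$ by the closure of $\langle(0,0)\rangle$, so that your snake-lemma comparison of component groups is valid---is genuinely subtle in the additive case (N\'eron models do not in general commute with isogeny quotients).  The paper's argument sidesteps this entirely.
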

\begin{proof}
Since $\eta$ has degree $5$ it follows that that $\frac{c_{E',p}}{c_{E,p}} =5^a$ for some $a \in \Z$. If the reduction is different from split multiplicative then $c_{E,p}$ and $c_{E',p}$ are at most 4, hence $a=0$ and $c_{E,p}=c_{E',p}$.

In \cite[Proposition 2.16]{Keil} it is shown by using Tate curves that if the reduction is split multiplicative then $a\in \{-1,1\}$, depending on whether the kernel is on the identity component of the N\'eron model or not.
\end{proof}

If $p\nmid \deg \eta =5$ then from \cite[Lemma 3.8]{schaefer} it follows that $\frac{\#\coker \eta_{\Q_p}}{\#\ker \eta_{\Q_p}}=\frac{c_{E',p}}{c_{E,p}}$.
Using this it follows easily that 
\begin{lemma} \label{lem:coker_eta_p} Suppose $p$ is a prime different from $5$.
\begin{enumerate}
 \item If $p\mid u^2+11uv-v^2$ and $p\equiv 4 \bmod 5$, then $\coker \eta_{\Q_p}=\Z/5\Z$.
\item  If $p\mid u^2+11uv-v^2$ and $p\equiv 1 \bmod 5$, then $\coker \eta_{\Q_p}=(\Z/5\Z)^2$.
\item If $p\mid uv$, then $\coker \eta_{\Q_p}=0$.
\item If $p$ is good for $E$, then $\coker \eta_{\Q_p}=\Z/5\Z$.
\end{enumerate}
\end{lemma}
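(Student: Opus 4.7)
The statement follows immediately from the Tamagawa-quotient formula of Schaefer recalled just before the lemma, once two easy pieces of bookkeeping are in place. First I would observe that the kernel of $\eta$ is the constant group scheme $\Z/5\Z$ generated by the $\Q$-rational point $(0,0)$, so $\#\ker\eta_{\Q_p}=5$ for every prime $p\neq 5$, independently of the reduction type. Applying $\#\coker\eta_{\Q_p}/\#\ker\eta_{\Q_p}=c_{E',p}/c_{E,p}$ then gives
\[ \#\coker\eta_{\Q_p} \;=\; 5\cdot\frac{c_{E',p}}{c_{E,p}}, \]
so the cardinality in each of the four cases is read off from the preceding lemma: ratio $1/5$ when $p\mid uv$ (case (3), trivial cokernel), ratio $5$ when $p\mid u^2+11uv-v^2$ and $p\equiv 1\bmod 5$ (case (2), order $25$), and ratio $1$ in the remaining ``otherwise'' situations, namely (1) and (4), giving order $5$ in both.

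Second, to upgrade these cardinalities to the claimed group structures I would note that $\coker\eta_{\Q_p}$ is annihilated by $5$: since $\eta\circ\eta^\vee=[5]$ on $E'$, for any $Q\in E'(\Q_p)$ the point $5Q=\eta(\eta^\vee(Q))$ already lies in the image of $\eta_{\Q_p}$. Consequently $\coker\eta_{\Q_p}$ is an $\F_5$-vector space, so its isomorphism type is determined by its order: $0$ in case (3), $\Z/5\Z$ in cases (1) and (4), and $(\Z/5\Z)^2$ in case (2).

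The whole argument is bookkeeping on top of the Schaefer formula and the previous Tamagawa-ratio lemma; I do not anticipate any real obstacle. The one point that is easy to miss is the $5$-torsion observation needed to rule out $\Z/25\Z$ in case (2), so in writing up I would be careful to record that step explicitly rather than leaving it implicit in the appeal to the Schaefer formula.
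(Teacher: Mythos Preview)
Your proposal is correct and matches the paper's approach: the paper states the lemma as an immediate consequence of Schaefer's formula $\#\coker\eta_{\Q_p}/\#\ker\eta_{\Q_p}=c_{E',p}/c_{E,p}$ together with the previous Tamagawa-ratio lemma. The only minor difference is in how the group structure (rather than just the order) is pinned down: you use the dual-isogeny identity $\eta\circ\eta^\vee=[5]$ to see the cokernel is $5$-torsion, whereas the paper records right after the lemma that $\coker\eta_{\Q_p}\subset H^1(G_{\Q_p},\Z/5\Z)$, which gives the same conclusion.
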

Now $\coker \eta_{\Q_p}\subset H^1(G_{\Q_p},\Z/5\Z)$. From \cite[Section II.5 Theorem 2, Proposition 17]{serre-gal-coh} it follows that for $p \nmid \deg \eta = 5$
\[ \#H^1(G_{\Q_p},\Z/5\Z)=\#H^0(G_{\Q_p},\Z/5\Z)\#H^0(G_{\Q_p},\bm_5)=5^a,\]
with $a=1$, if $p\equiv 4 \bmod 5$, and $a=2$, if $p\equiv 1 \bmod 5$. From this it follows that

\begin{proposition} Suppose $p$ is a prime of bad reduction for $E$ and $p\neq 5$ with $p\mid u^2+11uv-v^2$. Then $\coker \eta_{\Q_p}=H^1(G_{\Q_p},\Z/5\Z)$.
\end{proposition}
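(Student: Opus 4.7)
The plan is to prove the proposition by simply comparing cardinalities, since the inclusion $\coker \eta_{\Q_p} \subset H^1(G_{\Q_p},\Z/5\Z)$ is already established.

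First I would reduce to the two arithmetic cases. By the earlier lemma describing the reduction type, if $p \mid u^2+11uv-v^2$ and $p \ne 5$, then $p \equiv \pm 1 \bmod 5$. Thus there are only two subcases to handle: $p \equiv 1 \bmod 5$ and $p \equiv 4 \bmod 5$. In either case the reduction of $E$ at $p$ is multiplicative and the point $(0,0)$ lies on the identity component of the N\'eron model.

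Next I would read off $\#\coker \eta_{\Q_p}$ from Lemma~\ref{lem:coker_eta_p}: in the case $p \equiv 1 \bmod 5$ we have $\coker \eta_{\Q_p} \cong (\Z/5\Z)^2$, and in the case $p \equiv 4 \bmod 5$ we have $\coker \eta_{\Q_p} \cong \Z/5\Z$. Simultaneously, the local Euler characteristic formula already cited (Serre, \cite[Section II.5]{serre-gal-coh}) gives
\[ \#H^1(G_{\Q_p},\Z/5\Z) = \#H^0(G_{\Q_p},\Z/5\Z)\cdot\#H^0(G_{\Q_p},\bm_5), \]
so $\#H^1(G_{\Q_p},\Z/5\Z) = 25$ when $p \equiv 1 \bmod 5$ (since $\bm_5 \subset \Q_p$), and $\#H^1(G_{\Q_p},\Z/5\Z) = 5$ when $p \equiv 4 \bmod 5$ (since $H^0(G_{\Q_p},\bm_5) = 0$).

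Finally, I would conclude by comparing the two counts: in each of the two subcases the cardinality of $\coker \eta_{\Q_p}$ equals that of $H^1(G_{\Q_p},\Z/5\Z)$. Combined with the inclusion $\coker \eta_{\Q_p} \hookrightarrow H^1(G_{\Q_p},\Z/5\Z)$ coming from the Kummer-type sequence for $\eta$, equality of orders forces equality of groups, proving the proposition. There is essentially no obstacle here, since all of the real work was carried out in the preceding lemma and its proof; the one point to be careful about is verifying that the containment $\coker \eta_{\Q_p} \subset H^1(G_{\Q_p},\Z/5\Z)$ comes from the embedding of $A[\eta] \cong \Z/5\Z$ as Galois module (not $\bm_5$), which is what the paper has been using consistently.
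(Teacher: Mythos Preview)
Your proposal is correct and follows essentially the same approach as the paper: the paper also deduces the proposition directly from the inclusion $\coker \eta_{\Q_p}\subset H^1(G_{\Q_p},\Z/5\Z)$ together with the cardinality computations in Lemma~\ref{lem:coker_eta_p} and the local duality formula $\#H^1(G_{\Q_p},\Z/5\Z)=\#H^0(G_{\Q_p},\Z/5\Z)\cdot\#H^0(G_{\Q_p},\bm_5)$. One small notational slip: you write $A[\eta]$, but here the relevant kernel is $E[\eta]=\langle(0,0)\rangle\cong\Z/5\Z$ (the symbol $A$ is reserved in the paper for the product $E_{d_1}\times E_{d_2}$).
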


We now return to our abelian surface $A$. Then the above proposition enables us to determine $\coker \varphi_{\Q_p}$ for bad primes different from $5$. 

\begin{proposition} Suppose $p$ is a prime of bad reduction for $A$, $p\neq 5$. Then $\coker \varphi_{\Q_p}$ is isomorphic as an abelian group to
$$\begin{cases}
0, & \text{ if } p\mid u_1v_1u_2v_2,\\
(\Z/5\Z)^2, & \text{ if } p\mid \gcd(u_1^2+11u_1v_1-v_1^2,u_2^2+11u_2v_2-v_2^2), \ p\equiv 1 \bmod 5,\\
\Z/5\Z, & \text{ otherwise}.
\end{cases}$$
\end{proposition}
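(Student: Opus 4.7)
The plan is to reduce the statement to the computation of $\coker\eta_{i,\Q_p}$ already carried out in Lemma~\ref{lem:coker_eta_p}, by exploiting the factorisation $A\xrightarrow{\varphi}B\xrightarrow{\psi}A'$ of the isogeny $\rho=\eta_1\times\eta_2$. The first step is to reproduce over $\Q_p$ the four-term exact sequence used in Section~\ref{SecGlo}:
\[ 0\to \ker\psi_{\Q_p}/\varphi(\ker\rho_{\Q_p})\to \coker\varphi_{\Q_p}\to \coker\rho_{\Q_p}\to \coker\psi_{\Q_p}\to 0.\]
Since $\ker\rho\cong(\Z/5\Z)^2$ is a constant group scheme over $\Q$, its $\Q_p$-points have order $25$, and since $\ker\varphi\subset\ker\rho$ has order $5$ the image $\varphi(\ker\rho_{\Q_p})$ has order $5$. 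Likewise $\ker\psi$ is constant $\Z/5\Z$, so $\ker\psi_{\Q_p}$ has order $5$; comparing orders forces the leftmost term to vanish. Combined with $\coker\rho_{\Q_p}=\coker\eta_{1,\Q_p}\times\coker\eta_{2,\Q_p}$ this gives the short exact sequence
\[ 0\to \coker\varphi_{\Q_p}\to \coker\eta_{1,\Q_p}\times\coker\eta_{2,\Q_p}\to \coker\psi_{\Q_p}\to 0.\]

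The second step is to identify the map on the right. Functoriality of Galois cohomology applied to the map of short exact sequences $0\to A[\rho]\to A\to A'\to 0$ and $0\to B[\psi]\to B\to A'\to 0$ connected by $\varphi$ shows that the composite
\[ \coker\eta_{1,\Q_p}\times\coker\eta_{2,\Q_p}\hookrightarrow H^1(G_{\Q_p},A[\rho])\to H^1(G_{\Q_p},B[\psi])\]
is induced by the surjection $A[\rho]\twoheadrightarrow B[\psi]$, which under the Kummer identification reads $(x,y)\mapsto x^n/y$, exactly as in Section~\ref{SecGlo}. The target $H^1(G_{\Q_p},B[\psi])\cong H^1(G_{\Q_p},\Z/5\Z)$ has order $5$ if $p\equiv 4\bmod 5$ and order $25$ if $p\equiv 1\bmod 5$.

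A case analysis, fed by Lemma~\ref{lem:coker_eta_p} and the proposition immediately preceding, then finishes the proof. If $p\mid u_1v_1u_2v_2$ then at least one $\coker\eta_{i,\Q_p}$ vanishes, the composite restricts to an injection of the surviving factor into $H^1(G_{\Q_p},\Z/5\Z)$, and $\coker\varphi_{\Q_p}=0$ follows. If $p\equiv 1\bmod 5$ and $p\mid\gcd(u_1^2+11u_1v_1-v_1^2,u_2^2+11u_2v_2-v_2^2)$ then both factors equal all of $H^1(G_{\Q_p},\Z/5\Z)\cong(\Z/5\Z)^2$, the map $(x,y)\mapsto x^n/y$ is surjective because $\gcd(n,5)=1$, and counting yields $\#\coker\varphi_{\Q_p}=625/25=25$, hence $\coker\varphi_{\Q_p}\cong(\Z/5\Z)^2$. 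In every remaining sub-case ($p\mid\gcd$ with $p\equiv 4\bmod 5$, or $p$ dividing $u_i^2+11u_iv_i-v_i^2$ for exactly one $i$) at least one factor still fills out $H^1(G_{\Q_p},\Z/5\Z)$, the image of the composite is therefore all of $H^1(G_{\Q_p},\Z/5\Z)$, and counting gives $\coker\varphi_{\Q_p}\cong\Z/5\Z$.

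The main obstacle I expect is the second step, specifically the assertion that the formula $(x,y)\mapsto x^n/y$ still computes the map when $p\equiv 4\bmod 5$ and $\mu_5\not\subset\Q_p$, where the naive Kummer identification $H^1(G_{\Q_p},\mu_5)\cong\Q_p^*/\Q_p^{*5}$ is unavailable. I would handle this by working intrinsically in Galois cohomology: the surjection $A[\rho]\twoheadrightarrow B[\psi]$ is, in the rational bases chosen in Section~\ref{SecCon}, the additive map $(a,b)\mapsto b-na$, so applying the exact functor $H^1(G_{\Q_p},-)$ gives $(x,y)\mapsto y-nx$ on $H^1(G_{\Q_p},\Z/5\Z)^2\to H^1(G_{\Q_p},\Z/5\Z)$, which has the same kernel and image as the multiplicative $(x,y)\mapsto x^n/y$ used above, independently of whether $\zeta_5\in\Q_p$.
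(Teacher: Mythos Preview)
Your proof is correct and follows essentially the same approach as the paper: realize $\coker\varphi_{\Q_p}$ as the kernel of $\coker\eta_{1,\Q_p}\times\coker\eta_{2,\Q_p}\to H^1(G_{\Q_p},\Z/5\Z)$ under $(x,y)\mapsto nx-y$, then feed in Lemma~\ref{lem:coker_eta_p} and the preceding proposition (the paper phrases the final step as $\coker\varphi_{\Q_p}\cong\coker\eta_{2,\Q_p}$ once $\coker\eta_{1,\Q_p}=H^1$, which is exactly your counting argument). Your final-paragraph worry is unnecessary---the paper, like your own resolution, works directly with the additive map on $H^1(G_{\Q_p},\Z/5\Z)$ induced by the surjection of constant group schemes $A[\rho]\twoheadrightarrow B[\psi]$, so no Kummer identification over $\Q_p$ is ever invoked.
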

\begin{proof}
Recall that $\coker \varphi_{\Q_p} = \ker \left( \coker \eta_{1,\Q_p} \times \coker \eta_{2,\Q_p} \to \coker \psi_{\Q_p} \right),$ which equals
\[ \left( \coker \eta_{1,\Q_p} \times \coker \eta_{2,\Q_p} \right) \cap \ker \left(H^1(G_{\Q_p},\Z/5\Z)^2 \to H^1(G_{\Q_p},\Z/5\Z) \right).\]
The surjective map $H^1(G_{\Q_p},\Z/5\Z)^2 \to H^1(G_{\Q_p},\Z/5\Z)$ is given by $(x,y)\mapsto nx-y$. Suppose that $p\mid u_1v_1u_2v_2$, then by Lemma \ref{lem:coker_eta_p} we have that $\coker \eta_{i,\Q_p}=0$, for at least one $i$, and therefore $\coker \varphi_{\Q_p}=0$.

Suppose now $p\nmid u_1v_1u_2v_2$. By assumption one of the $E_{d_i}$ has bad reduction at $p$, let's say $E_{d_1}$. Since $p\nmid 5u_1v_1$ it follows from the above proposition that $\coker \eta_{1,\Q_p}=H^1(G_{\Q_p},\Z/5\Z)$ and hence $\coker \varphi_{\Q_p}\cong \coker \eta_{2,\Q_p}$. Now $E_{d_2}$ has either additive or good reduction. The reduction of $E_{d_2}$ is additive if and only if $p\mid \gcd(u_1^2+11u_1v_1-v_1^2,u_2^2+11u_2v_2-v_2^2)$. Now apply Lemma \ref{lem:coker_eta_p} to deduce the structure of $\coker \eta_{2,\Q_p}$, hence the structure of $\coker \varphi_{\Q_p}$. \end{proof}

It remains to check the case $p=5$. As before, we first have a look at the elliptic curve $E$. If $5 \mid uv$ then as above the reduction is split multiplicative and $\frac{c_{E',p}}{c_{E,p}}=\frac{1}{5}$. Using Tate curves one easily shows that $\coker \eta_{\Q_p}=0$.

If $5\mid u^2+11uv-v^2$ then the reduction is additive. In particular the component groups of $E$ and $E'$ have the same order, which is also the case if the reduction is good. Therefore $\frac{c_{E',p}}{c_{E,p}}=1$. The isogeny $\eta:E \to E'$ can be written as a power series in one variable in a neighbourhood of the point $O$. Again from \cite[Lemma 3.8]{schaefer} it follows that
\[ 
 \frac{\#\coker \eta_{\Q_5}}{\#\ker \eta_{\Q_5}} = |\eta'(0)|_5^{-1},
\]
where $|\eta'(0)|_5$ is the normalized $5$-adic absolute value of the leading coefficient of the power series representation of $\eta$ evaluated at $0$. This can be easily computed using V\'elu's algorithm \cite{velu}. In \cite[Lemma 4.1, Proposition 4.2]{Keil} it is shown that in the additive case $v_5(u^2+11uv-v^2)\in \{2,3\}$ and that if $v_5(u^2+11uv-v^2)=2$ then $|\eta'(0)|_5=1$, and if $v_5(u^2+11uv-v^2)=3$ then $|\eta'(0)|_5=1/5$. If $E$ has good reduction at $p=5$ then it follows that $\# \coker \eta_{\Q_p}=\# \ker \eta_{\Q_p}$, because in this case we also have $|\eta'(0)|_5=1$. We summarize as follows.

\begin{lemma} Suppose $p=5$.
\begin{enumerate}
\item If $p\mid uv$, then $\coker \eta_{\Q_p}=0$.
\item If $p$ is good for $E$, then $\coker \eta_{\Q_p}=\Z/5\Z$.
 \item If $p\mid u^2+11uv-v^2$, then $\coker \eta_{\Q_p}= \begin{cases}
(\Z/5\Z)^2, & 5^3 \mid u^2+11uv-v^2,\\
\Z/5\Z, &  \textnormal{otherwise.}
\end{cases}$
\end{enumerate}
\end{lemma}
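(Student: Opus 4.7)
My plan is to split according to the reduction type of $E$ at $p=5$, as classified by the first lemma of this section: split multiplicative when $5 \mid uv$, additive when $5 \mid u^2+11uv-v^2$, and good otherwise. Since $\coker \eta_{\Q_5}$ embeds into $H^1(G_{\Q_5}, \Z/5\Z)$ it is $5$-torsion, so its isomorphism class is determined by its order; I only need to show this order equals $1$, $5$, or $25$ in the appropriate cases.

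For part (1), I would pass to the Tate curve uniformization $E(\overline{\Q_5}) \cong \overline{\Q_5}^*/q^{\Z}$ with $c_E = v_5(q)$. The preceding lemma forces $(0,0)$ off the identity component, so its image in the component group $\Z/c_E\Z$ has order $5$; in particular $5 \mid c_E$. Because $\zeta_5 \notin \Q_5^*$, the only $\Q_5$-rational order-$5$ class off the identity component is represented by $q^{c_E/5}$. Then $\eta \colon E \to E' = E/\langle(0,0)\rangle$ is identified on $\Q_5$-points with the natural surjection $\Q_5^*/q^\Z \twoheadrightarrow \Q_5^*/q^{c_E/5}\Z$, giving $\coker \eta_{\Q_5} = 0$ on the nose.

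For parts (2) and (3), I would invoke \cite[Lemma 3.8]{schaefer} together with $\#\ker \eta_{\Q_5} = 5$ (since $(0,0) \in E(\Q)$). In case (2), good reduction gives $c_E = c_{E'} = 1$ and $\eta$ reduces to an \'etale isogeny of the special fibres, so $|\eta'(0)|_5 = 1$; Schaefer's formula then yields $\#\coker \eta_{\Q_5} = 5$. In case (3), additive reduction keeps both Tamagawa numbers among the divisors of $4$, and the preceding Tamagawa-quotient lemma forces $c_E = c_{E'}$, so the formula collapses to
\[\frac{\#\coker \eta_{\Q_5}}{\#\ker \eta_{\Q_5}} = |\eta'(0)|_5^{-1}.\]
By \cite[Lemma 4.1 and Proposition 4.2]{Keil}, $v_5(u^2+11uv-v^2) \in \{2,3\}$, and a V\'elu-algorithm computation yields $|\eta'(0)|_5 = 1$ or $1/5$ respectively; hence $\#\coker \eta_{\Q_5} = 5$ or $25$.

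The main obstacle is the V\'elu computation underlying case (3): the numerical bifurcation $|\eta'(0)|_5 \in \{1, 1/5\}$ keyed to $v_5(u^2+11uv-v^2) \in \{2,3\}$ is exactly what separates $\coker \cong \Z/5\Z$ from $\coker \cong (\Z/5\Z)^2$, and without it the three cases could only be assembled at the level of orders, not distinguished. Since this distinction is precisely the content of \cite[Lemma 4.1, Proposition 4.2]{Keil}, the present lemma essentially packages the three reduction cases around that input.
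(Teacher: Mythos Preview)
Your approach matches the paper's: the argument is given in the discussion immediately preceding the lemma, using Tate curves for part~(1) and Schaefer's formula $\#\coker\eta_{\Q_5}/\#\ker\eta_{\Q_5} = |\eta'(0)|_5^{-1}$ together with the cited results from \cite{Keil} for parts~(2) and~(3).

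There is, however, a slip in your Tate-curve computation for part~(1). Since $5\mid c_E$, the element $q^{c_E/5}$ is an \emph{integer} power of $q$ and hence represents the identity in $\Q_5^*/q^{\Z}$, not a point of order~$5$; likewise $q^{(c_E/5)\Z}\subseteq q^{\Z}$, so your ``natural surjection'' points the wrong way. What you want is a fifth root $q'\in\Q_5^*$ of $q$: the existence of the $\Q_5$-rational $5$-torsion point $(0,0)$ off the identity component forces $q\in(\Q_5^*)^5$, and then $E'\cong \Q_5^*/(q')^{\Z}$ with $\eta_{\Q_5}$ realised as the visibly surjective quotient $\Q_5^*/(q')^{5\Z}\twoheadrightarrow\Q_5^*/(q')^{\Z}$. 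With this correction your argument is complete and coincides with the paper's.
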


Now we can calculate $\coker \varphi_{\Q_p}$ in the remaining case $p=5$.

\begin{lemma} The cardinality of $\coker \varphi_{\Q_5}$ equals the cardinality of $\ker \varphi_{\Q_5}$, unless
 \begin{enumerate}
  \item $5\mid u_1v_1u_2v_2$. In this case $\#\coker \varphi_{\Q_5}=0$.
\item $5^3 \mid \gcd(u_1^2+11u_1v_1-v_1^2,u_2^2+11u_2v_2-v_2^2)$. In this case $\# \coker \varphi_{\Q_5}=5^2$.
 \end{enumerate}

\end{lemma}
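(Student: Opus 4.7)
The plan is to proceed in direct parallel to the proof of the preceding proposition for $p \neq 5$. Starting from the identity
\[
\coker \varphi_{\Q_5} = \bigl(\coker \eta_{1,\Q_5} \times \coker \eta_{2,\Q_5}\bigr) \cap \ker\bigl(H^1(G_{\Q_5},\Z/5\Z)^2 \to H^1(G_{\Q_5},\Z/5\Z)\bigr),
\]
with the latter map $(x,y) \mapsto nx - y$, I would first record that $\# H^1(G_{\Q_5},\Z/5\Z) = 25$. This follows either from local class field theory (the $H^1$ identifies with $\operatorname{Hom}(\Q_5^{\times},\Z/5\Z)$ and $\Q_5^\times/\Q_5^{\times 5} \cong (\Z/5\Z)^2$) or from the local Euler characteristic together with Tate duality and $\mu_5(\Q_5) = \{1\}$. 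The preceding lemma classifying $\coker \eta_{\Q_5}$ then tells us that each $\coker \eta_{i,\Q_5}$ is one of $0$, $\Z/5\Z$, or $(\Z/5\Z)^2 = H^1(G_{\Q_5},\Z/5\Z)$.

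The two exceptional cases fall out immediately. If $5 \mid u_1v_1u_2v_2$, say $\coker \eta_{1,\Q_5} = 0$, then since $\coker \eta_{2,\Q_5} \hookrightarrow H^1(G_{\Q_5},\Z/5\Z)$ the map $(0,y) \mapsto -y$ is injective on $0 \times \coker \eta_{2,\Q_5}$, forcing $\coker \varphi_{\Q_5}$ to be trivial. If $5^3 \mid \gcd(u_1^2+11u_1v_1-v_1^2,\,u_2^2+11u_2v_2-v_2^2)$, then both $\coker \eta_{i,\Q_5}$ equal $H^1(G_{\Q_5},\Z/5\Z)$, so the intersection equals the full kernel of $(x,y) \mapsto nx-y$, namely the graph $\{(x,nx) : x \in H^1\}$, of order $25$.

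The remaining ``default'' case splits into two subcases. If exactly one of the $\coker \eta_{i,\Q_5}$ has order $25$ and the other has order $5$, a direct count in $H^1(G_{\Q_5},\Z/5\Z)^2$ produces a kernel of size $5$. The delicate subcase is when both $\coker \eta_{i,\Q_5}$ are subgroups of order exactly $5$ inside $H^1(G_{\Q_5},\Z/5\Z) \cong (\Z/5\Z)^2$: here I must show that these two ``lines'' coincide (up to the $n$-scaling), so that the intersection with the kernel has size $5$ rather than $1$. The plan is to prove that in both the good-reduction case and the additive $v_5(u_i^2+11u_iv_i-v_i^2) = 2$ case, $\coker \eta_{i,\Q_5}$ coincides with the unramified line $H^1_{\mathrm{ur}}(G_{\Q_5},\Z/5\Z)$, of order $5$. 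Since $(0,0)$ lies on the identity component of the N\'eron model and $|\eta_i'(0)|_5 = 1$ in both cases, the isogeny $\eta_i$ extends to an \'etale isogeny of identity components over $\Z_5$, so any preimage under $\eta_i$ of a $\Q_5$-point can be chosen over $\Q_5^{\mathrm{ur}}$, making the resulting cocycle unramified; equality then follows from $\#H^1_{\mathrm{ur}} = 5 = \#\coker \eta_{i,\Q_5}$.

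The main obstacle will be making this last identification rigorous in the additive $v_5 = 2$ case, where the N\'eron model has a genuine singularity and the clean \'etale-lifting argument needs care. A possibly cleaner route is to describe the Tate-dual $\coker \eta_{i,\Q_5}^\vee \subset \Q_5^{\times}/\Q_5^{\times 5}$ using the explicit Kummer function $f(P) = -x^2 + y + xy$ from Section~\ref{SecGlo} and to observe that in both relevant reduction types its values lie in $\Z_5^{\times}$ modulo fifth powers (since $d_i$ is a unit at $5$), so by local Tate duality its annihilator is precisely $H^1_{\mathrm{ur}}(G_{\Q_5},\Z/5\Z)$. Once the identification is in hand, all three cases of the lemma follow by routine counting.
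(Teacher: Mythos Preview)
Your approach is essentially the same as the paper's. Both arguments use the decomposition
\[
\coker \varphi_{\Q_5} = \bigl(\coker \eta_{1,\Q_5} \times \coker \eta_{2,\Q_5}\bigr) \cap \ker\bigl(H^1(G_{\Q_5},\Z/5\Z)^2 \to H^1(G_{\Q_5},\Z/5\Z)\bigr),
\]
the fact that $H^1(G_{\Q_5},\Z/5\Z)\cong(\Z/5\Z)^2$, and the case split according to whether each $\coker\eta_{i,\Q_5}$ has order $1$, $5$, or $25$. The delicate subcase where both cokernels have order $5$ is handled in the paper exactly as you propose, by identifying each with the unramified line $H^1_{nr}(G_{\Q_5},\Z/5\Z)$; the paper simply cites \cite[Propositions~2.10 and~3.5]{Keil} and \cite[Section~3]{schaefer-stoll} for this identification rather than sketching the \'etale-lifting argument you outline. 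Your honest flag that the additive $v_5=2$ case is the point requiring care is well placed: that is precisely where the paper defers to the references, and your proposed alternative via local Tate duality and the explicit descent map $f$ would also work but needs the orthogonality of $\coker\eta_{\Q_5}$ and $\coker\eta^\vee_{\Q_5}$ under the local pairing, together with a check that the annihilator of $\Z_5^\times/\Z_5^{\times 5}\subset H^1(G_{\Q_5},\bm_5)$ is indeed $H^1_{nr}(G_{\Q_5},\Z/5\Z)$.
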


\begin{proof} If $\coker \eta_{i,\Q_5}=0$, for one $i$, then $\coker \varphi_{\Q_5}=0$. The first condition is equivalent with $5\mid u_1v_1u_2v_2$.

Suppose now that $\coker \eta_{i,\Q_5}\neq 0$ for both $i$, which implies that $p=5$ is additive or good for $E_{d_i}$. We need two facts from \cite[Section II.5 Proposition 18, Theorem 5]{serre-gal-coh}, namely $H^1(G_{\Q_5},\Z/5\Z)=(\Z/5\Z)^2$ and $H^1_{nr}(G_{\Q_5},\Z/5\Z)=\Z/5\Z$. As in the previous proposition we have that if $\coker \eta_{1,\Q_5}=H^1(G_{\Q_5},\Z/5\Z)$, then $\coker \varphi_{\Q_5} \cong \coker \eta_{2,\Q_5}$ and vice versa. This gives the second case of the lemma, since $\coker \eta_{i,\Q_5}=(\Z/5\Z)^2$ if and only if $5^3 \mid u_i^2+11u_iv_i-v_i^2$, and $\coker \eta_{i,\Q_5}=\Z/5\Z$ otherwise. 

It remains to consider $\coker \eta_{1,\Q_5}=\coker \eta_{2,\Q_5}=(\Z/5\Z)$. In this case one can show that $\coker \eta_{i,\Q_5} = H^1_{nr}(G_{\Q_5},\Z/5\Z)$, for both $i$; see \cite[Proposition 2.10, Proposition 3.5]{Keil} and \cite[Section 3]{schaefer-stoll}. Thus the kernel of $ \coker \eta_{1,\Q_5} \times \coker \eta_{2,\Q_5} \to \coker \psi_{\Q_5}$, which equals $\coker \varphi_{\Q_5}$, has five elements. This finishes the proof.
\end{proof}

Putting everything together yields

\begin{proposition} Let $p$ be a prime. Then 
\[ \frac{\#\coker \varphi_{\Q_p} }{\# \ker \varphi_{\Q_p}}\]
 is a non-square if and only if one of the following occurs
\begin{enumerate}
 \item $p\mid u_1v_1u_2v_2$,
\item $p\mid \gcd (u_1^2+11u_1v_1-v_1^2,u_2^2+11u_2v_2-v_2^2)$ and $p\equiv 1 \bmod 5$,
\item $p^3\mid \gcd (u_1^2+11u_1v_1-v_1^2,u_2^2+11u_2v_2-v_2^2)$ and $p=5$.
\end{enumerate}
\end{proposition}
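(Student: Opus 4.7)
The plan is to reduce the proposition to a case analysis already packaged by the earlier results, using the observation that every quantity in sight is a power of $5$.

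First I would note that by the first lemma of Section~\ref{SecCon} we have $\#\ker \varphi_{\Q_p} = 5$ for every prime $p$, since $\ker\varphi$ is generated by the $\Q$-rational point $(0,0)\times[n](0,0)$. Writing $\#\coker\varphi_{\Q_p} = 5^k$, the ratio equals $5^{k-1}$, which is a non-square in $\Q^*$ if and only if $k$ is even, equivalently $\#\coker\varphi_{\Q_p} \in \{1,25\}$ (higher even powers will not occur). So the task is to list all primes $p$ for which $\#\coker\varphi_{\Q_p}$ is either $1$ or $25$.

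Next I would split on whether $p=5$ or not. For $p\neq 5$ and $p$ of good reduction for $A$, the degree of $\varphi$ is prime to $p$, so $\#\coker\varphi_{\Q_p}=\#\ker\varphi_{\Q_p}=5$ and the ratio is a square; these primes do not contribute. For $p\neq 5$ of bad reduction, the previous proposition gives exactly three subcases: if $p\mid u_1v_1u_2v_2$ then $\#\coker = 1$ (non-square ratio, yielding case~(1)); if $p\mid \gcd(u_1^2+11u_1v_1-v_1^2,u_2^2+11u_2v_2-v_2^2)$ with $p\equiv 1\bmod 5$ then $\#\coker = 25$ (non-square ratio, yielding case~(2)); otherwise $\#\coker = 5$ and the ratio is $1$, a square.

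For $p=5$ I would invoke the preceding lemma. If $5\mid u_1v_1u_2v_2$ then $\#\coker = 1$, giving the $p=5$ instance of case~(1). If $5^3\mid \gcd(u_1^2+11u_1v_1-v_1^2,u_2^2+11u_2v_2-v_2^2)$ then $\#\coker = 25$, which is case~(3). In all remaining subcases the lemma gives $\#\coker = \#\ker = 5$, a square ratio. Collecting the non-square outcomes from both analyses yields exactly the three conditions listed in the proposition, and by construction every other prime gives a square ratio. The main conceptual point — and the only place any real work has happened — is the two previous propositions/lemmas computing $\#\coker\varphi_{\Q_p}$ in every reduction type; the present proposition is essentially a tabulation of those results through the lens of the parity of $k$, so there is no remaining obstacle beyond careful bookkeeping.
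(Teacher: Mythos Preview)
Your proposal is correct and matches the paper's approach: the paper presents this proposition with no written proof, introducing it only by ``Putting everything together yields,'' so the intended argument is precisely the tabulation of the preceding proposition (for $p\neq 5$) and lemma (for $p=5$) that you carry out. One small quibble: the fact that $\#\ker\varphi_{\Q_p}=5$ is stated at the start of Section~\ref{SecLoc} (since $A[\varphi]\cong\Z/5\Z$ is generated by a $\Q$-rational, hence $\Q_p$-rational, point), not in the lemma of Section~\ref{SecCon} you cite, which only treats $L=\Q$; the reasoning is the same, but the reference should point there.
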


\section{Algorithm}\label{SecAlg}
The code was implemented in Sage \cite{sage} and is available at \cite{worksheet}. The algorithm consists of two main steps and an initialization step, which we call step 0. In step 1 one creates a database of elliptic curves having a point $P$ of order $5$, which are parametrized by two coprime positive integers $(u,v)$. One has to specify which pairs $(u,v)$ one wants to consider. In step 2 one takes such a database of elliptic curves $E_d$, for $d=u/v$, and goes over all pairs of these curves and determines whether the order of the Tate-Shafarevich group of the abelian surfaces $B=E_{d_1} \times E_{d_2} / \langle (P_1,P_2)\rangle$ is a square. For trivial reasons, pairs of the same elliptic curve are omitted and pairs are considered to be without order.

Step 0: Fix a (large) integer $M$. For each prime number $p\leq M$ determine the 
prime ideals $\p$ of  $K = \Q(\zeta_5)$ above $p$ and fix an ordering of them. Then fix for each prime ideal $\p$ a generator $t_\p$.

Step 1: Fix a positive integer $N$. For each pair of coprime positive integers $(u,v)$, such that $\max(u,v)\leq N$, we collect the following data associated to $E:=E_d$, for $d=u/v$. (Optional: Filter the pairs $(u,v)$ by other limitations, e.g., considering only those for which the corresponding elliptic curves have conductor $\leq C$.)
\begin{itemize} 
\item Collect all the primes dividing $5uv(u^2+11uv-v^2)$ in a set $S$.
\item Collect all the primes dividing $uv$ in a set $T$.
\item Collect all the primes $p\equiv 1\bmod 5$ dividing $u^2+11uv-v^2$ in a set $U$.
\item If $v_5(u^2+11uv-v^2)=3$, put also $p=5$ into the set $U$.
\item Determine the analytic rank $r$ of $E$.
\item Determine a system of $r$ generators of a sublattice $\Lambda$ of $E(\Q)$, such that the points of infinite order modulo torsion are not divisible by 5. Take the image of $\Lambda$ in $\Q(S,5)$ to determine a basis $P$ of $\coker \eta_\Q^\vee\subset \Q(S,5)$. The data for each basis element consists of a pair for each prime in $S$, where the first entry is the corresponding element in $S$ and the second entry is the exponent as an element in $\Z/5\Z$. 
\item Calculate the image of $\Lambda$ under $\eta$ in $E'(\Q)$ and determine which image points are divisible by 5 modulo torsion. Divide if possible and determine the non-trivial 5-torsion points of $E'(\Q)$ to get a sublattice $\Lambda '$ of $E'(\Q)$, such that the points of infinite order modulo torsion are not divisible by 5. Use this information to get $\dim \coker \eta_\Q$. 
\item Take the image of $\Lambda '$ in $K(S,5)$ to determine a basis $Q$ for $\coker \eta_\Q \subset K(S,5)$. The data for each basis element consists of a pair for each prime in $S$ and a pair for the units, where the first entry is the corresponding element in $S$, respectively $1$, and the second entry is a list of elements in $\Z/5\Z$, which contains as many entries as there are prime ideals $\p$ in $K$ over $p$, respectively two entries, and these entries are the exponents corresponding to the prime ideals $(t_\p)$ with the chosen order, respectively the exponents of the units. 
\end{itemize}

Step 2: To determine whether the order of $\sha(B_{d_1,d_2}/\Q)$ is a square, for each pair of pairs $(u_1,v_1),(u_2,v_2)$ from the first step (modulo ordering and equality), do the following:
\begin{itemize} 
 \item Set $L:=-\# (T_1\cup T_2)+\# (U_1\cap U_2)$.
\item Fix an ordering for $\mathcal{S}:=S_1 \cup S_2$.
\item Write out the elements from $P_1 \cup P_2$ into a matrix with respect to $\mathcal{S}$. This gives a matrix with entries in $\Z/5\Z$. Calculate the rank of this matrix, which equals the dimension of $\coker \varphi^\vee_\Q$.
\item Write out the elements from $Q_1 \cup Q_2$ into a matrix with respect to the prime ideals $(t_\p)$ lying over the primes of $\mathcal{S}$ (and with respect to the units). This gives a matrix with entries in $\Z/5\Z$. Calculate the rank of this matrix, which equals the dimension of $\coker \psi_\Q$. 
\item Set $G:= \dim \coker \varphi^\vee_\Q - \dim \coker \eta_{1,\Q} - \dim \coker \eta_{2,\Q} + \dim \coker \psi_\Q$. (Recall that $\dim \coker \varphi_\Q = \dim \coker \eta_{1,\Q} + \dim \coker \eta_{2,\Q} - \coker \psi_\Q$.)
\end{itemize}

Then the local factor (without the infinite prime) is a non-square if and only if $L$ is odd, and the global factor (without the kernels) is a non-square if and only if $G$ is odd. Since the contribution of the infinite prime and the kernels cancel, we have that $\sha (B_{d_1,d_2}/\Q)$ has non-square order if and only if $L+G$ is odd.

The constructed databases and obtained results are given in the next section. To conclude this section, we make some comments on the implementation. Step 0 in the cases considered is not computational demanding. For example, on a desktop computer it may take some seconds up to a few minutes to compute all generators for all prime ideals lying over all primes up to $500,000$. Step 2 is also no problem. It consists only of simple set operations and calculating ranks of small matrices with coefficients in $\Z/5\Z$. Even a few million of pairs of elliptic curves can be considered in under an hour. 

The computational demanding part is step 1. There are two main issues. The most problematic calculation is determining $r$ generators of a finite index subgroup of the Mordell-Weil group, where $r$ is the analytic rank. We used the standard Sage method `E.point\_search(height\_limit=18,rank\_bound=r)', and in case this did not come up with enough points we tried some of the remaining curves with `E.gens()'. In several cases these methods did not provide an answer within 48 hours on a single CPU for a single elliptic curve. For these curves we used the method `MordellWeilShaInformation()' in MAGMA \cite{magma}, which could handle all our problematic curves in a few seconds each. 

The second problematic calculation in the actual code is computing the image of $\coker \eta_\Q$ in $K(S,5)$. We try to factor ideals of $K$, which are generated by elements of possibly very big norm. For example, the curve $E_d$, for $d=1/94$, has analytic rank $1$ and the numerator and denominator of the image of the point of infinite order in $K(S,5)$ have about $600$ digits and Sage was not able to factor the corresponding ideal. As we already knew that the image is trivial, since the dimension of $\coker \eta_\Q$ was zero, we could skip this calculation. Considering this information in the algorithm all curves we tried worked fine. This problem might be avoidable by trying another strategy working modulo primes. The rest of step 1 is no problem for moderately chosen $d=u/v$, since it is mainly prime factorization of integers and of rational polynomials of degree $25$ (to divide points by $5$), as well as calculating isogenies and analytic ranks. On a desktop computer one could produce in a few hours a database of a few thousand curves.

\begin{remark}
Step 0 and 2 do not use any assumptions, but for step 1 we assume the Birch and Swinnerton-Dyer conjecture in case the elliptic curve is of analytic rank $r \geq 2$, to conclude that the calculated sublattices $\Lambda$ and $\Lambda'$ are of finite index and that the Tate-Shafarevich groups are finite. Thus, only in case that both elliptic curves $E_{d_i}$ have analytic rank $r \leq 1$ the result of the algorithm about $\# \sha(B_{d_1,d_2})$ modulo squares is completely unconditional. 
\end{remark}

\section{Results}\label{SecRes}

Given the above described algorithm, one can produce in short time millions of examples of abelian surfaces over $\Q$, such that either the order of the Tate-Shafarevich group is a square or five times a square. In case the two elliptic curves were both of analytic rank $r \leq 1$ these examples are completely unconditional. We constructed two databases of elliptic curves using step 1 of the algorithm. The first database consists of all elliptic curves $E_d$, $d=u/v$, with $u,v$ positive integers and $\max(u,v)\leq 50,000$, such that the conductor of $E_d$ is bounded by $C=10^6$. The second database consists of all elliptic curves, such that $\max(u,v) \leq 100$, for $u,v$ positive integers. 

Database 1 contains $2212$ elliptic curves, all of them having analytic rank $r \leq 2$. It is likely that there are no further elliptic curves of conductor at most $10^6$, which have  a rational torsion point of order $5$, since there is no such curve with $4617 < \max(u,v) \leq 50,000$. The database is described in more detail in Table 1, where we state for each analytic rank the number of elliptic curves with conductor at most $10^6$ and with $\max(u,v)\leq N$. Database 2 contains $6,087$ elliptic curves. All of them have analytic rank $r \leq 3$. See Table 2 for more details. In the following we will present the results of step 2 of the algorithm applied to the two databases described above. 

Database 1 yields $2,445,366$ abelian surfaces $B_{d_1,d_2}$. It turns out that $47.01\%$ of these surfaces have a Tate-Shafarevich group of non-square order. Database 2 leads to $18,522,741$ abelian surfaces. The percentage of the non-square case is $49.31$. The intersection of the two databases consists of $1,391$ curves, hence we considered $966,745$ surfaces twice. In total this gives $20,001,362$ surfaces, of which $49.16\%$ have a Tate-Shafarevich group of non-square order.

\begin{center}
\begin{table}[bhtp]
    \begin{tabular}{|r|r|r|r|r|}
    \hline
    $N$&
    $\# E_d$, $C= 10^6$&
    $\# \{r=0\}$&
    $\# \{r=1\}$&
    $\# \{r=2\}$   
    \\
    \hline
  50,000  & 2,212  & 987  & 1,109 & 116 \\
  4,617  & 2,212  & 987 &1,109  & 116 \\
   3,375 & 2,211 & 986 &1,109  & 116\\
   3,072  & 2,210 & 986 & 1,108  & 116\\
   2,695  & 2,209 & 986  &1,107  & 116 \\
  2,000  & 2,200 & 982  &1,102 & 116 \\
  1,000  & 2,174 & 963 &1,095 & 116\\
   900 & 2,170 & 961 &1,093  & 116 \\
   800 & 2,159  & 956 &1,088 & 115 \\
  700  & 2,145 & 951  &1,079  & 115\\
   600  & 2,119 & 941  &1,063  & 115 \\
 500  & 2,088 &921  &1,052  &115\\
   400 & 2,066 & 912 &1,039 &115 \\
  300 & 1,993  &872  &1,009  &112 \\
   200 & 1,818 & 786 & 929 &103 \\
   100 & 1,391 & 616  &697  &78\\
  50 & 845& 394  &405 & 46 \\
    \hline
    \end{tabular}
  \caption{Database 1: Number of elliptic curves $E_d$ with conductor $\leq 10^6$ and $\max(u,v)\leq N$.}
\end{table}
\end{center}

\begin{center}
\begin{table}[bhtp]
    \begin{tabular}{|r|r|r|r|r|r|}
    \hline
    $N$&
    $\# E_d$&
    $\# \{r=0$\}&
    $\# \{r=1$\}&
    $\# \{r=2$\}&
    $\# \{r=3$\}
    \\
    \hline
 100 & 6,087  & 2,390 & 3,038 & 633 & 26 \\
 90 & 4,959  & 1,987& 2,463  & 490 & 19 \\
 80 & 3,931  & 1,597 & 1,940  & 380 & 14 \\
 70 & 2,987  & 1,235 & 1,455  & 287 & 10 \\
 60 & 2,203 & 925& 1,074  & 198& 6\\
 50 & 1,547 & 660 & 760 & 123& 4 \\
 40 & 979 & 412 & 494 & 70 & 3 \\
 30 & 555& 245 & 277 & 33& - \\
 20 & 255& 130& 115 & 10& - \\
 10 & 63& 40& 22& 1& -\\
    \hline
    \end{tabular}
  \caption{Database 2: Number of elliptic curves $E_d$, such that  $\max(u,v)\leq N$.}
\end{table}
\end{center}

We did two different experiments with the two databases. In experiment 1 we investigated how the rank influences the squareness of the Tate-Shafarevich group. We list the result in Table 3 for Database 1 and in Table 4 for Database 2. The first three, respectively four, entries correspond to pairs $(E_1,E_2)$ with the same analytic rank. The following three, respectively six, lines correspond to pairs with different analytic ranks and the final line corresponds to pairs with analytic rank $r \leq 1$. If we consider abelian surfaces of fixed analytic rank of at least $4$ then the density of the surfaces with square Tate-Shafarevich group seems to be significant larger than $0.5$. However the surfaces with rank larger than $2$ inside our family are conjectured to have density zero and our database contains very few such cases. The calculations with curves of rank $r\leq 1$ all show that the non-square case happens in about $50\%$ of all cases. For both experiments we list how many abelian surfaces $B_{d_1,d_2}$ occur in each of the cases, we state the percentage of the surfaces with square Tate-Shafarevich group, and we give the percentage of in how many cases the parity of the rank of the abelian surface agrees with the parity of the exponent of the regulator quotient (RE). Note that the results are unconditional in case $\textnormal{rk}(E_i)\leq 1$, for both $E_i$. If one of the analytic ranks is at least $2$ then we assume that it is at least the Mordell-Weil rank.

\begin{center}
\begin{table}[bhtp]
    \begin{tabular}{|l||r|r|r|}
    \hline
    & \multicolumn{3}{|c|}{$C=10^6, \ N=50,000$}\\
    \hline
    &$\# B_{d_1,d_2}$& $\% \sha=\square$ & \% RE $\equiv$ rk $(2)$  \\
    \hline
$r=0$   & 486,591 & 54.041& 100.00\\
$r=1$ & 614,386  & 58.614 & 63.51\\
$r=2$ &  6,670 &  92.039&55.53  \\
\hline
$r=0, r=1$ & 1,094,583  & 46.634 & 83.44 \\
$r=0, r=2$    & 114,492  &  52.867&47.96 \\
$r=1, r=2$   & 128,644   & 74.314 &42.48 \\
\hline
$r\leq 1$ & 2,195,560  &  51.628& 81.53 \\
\hline
 \end{tabular}
  \caption{Results of experiment 1 for Database 1.}
\end{table}
\end{center}

\begin{center}
\begin{table}[bhtp]
    \begin{tabular}{|l||r|r|r|}
    \hline
    & \multicolumn{3}{|c|}{$N=100$} \\
    \hline
    &$\# B_{d_1,d_2}$& $\% \sha=\square$ & \% RE $\equiv$ rk $(2)$ \\
    \hline
$r=0$   & 2,854,855   & 48.598 & 100.00\\
$r=1$ & 4,613,203  & 48.882 & 80.91\\
$r=2$ & 200,028  & 73.031 &44.03 \\
$r=3$& 325 &  98.154& 51.08 \\
\hline
$r=0, r=1$  & 7,260,820  & 51.366 & 91.02\\
$r=0, r=2$  & 1,512,870  & 50.567 & 71.36\\
$r=0, r=3$   & 62,140  & 49.891 & 52.73\\
$r=1, r=2$   & 1,923,054  & 52.717 & 59.50\\
$r=1, r=3$  & 78,988  & 60.632 & 46.23\\
$r=2, r=3$   & 16,458  & 84.470 & 48.23\\
\hline
$r\leq 1$ & 14,728,878  & 50.051 & 89.59\\
\hline
 \end{tabular}
  \caption{Results of experiment 1 for Database 2.}
\end{table}
\end{center}

In experiment 2 we looked for the behaviour of the distribution of square and non-square order Tate-Shafarevich groups for increasing conductor, respectively height, of the elliptic curves. Hence we filtered Database 1 for different values of conductor bounds $C$ and Database 2 for different values of height bounds $N$. For low bounds, the non-square case was less likely. When we increase these bounds this frequency tends to approximately $50\%$. The results of experiment 2 is given in Table 5 for Database 1 and Table 6 for Database 2. Note that for some of the surfaces we assume the weak form of the Birch and Swinnerton-Dyer conjecture mentioned above.

\begin{center}
\begin{table}[bhtp]
    \begin{tabular}{|r||r|r||r|r|}
    \hline
    $C$&
    $\# E_d$&
    $\# B_{d_1,d_2}$&
    $\% \sha=\square$ & \% RE $\equiv$ rk $(2)$\\ 
    \hline
  1,000,000  & 2,212 & 2,445,366 &52.990 & 77.84\\
  800,000 & 1,966 & 1,931,595 &53.232 & 77.16\\
600,000  & 1,683   & 1,415,403 & 53.758 & 76.06\\
 400,000  & 1,351  & 911,925  & 54.215 & 75.24\\
  200,000 & 924  &  426,426  &55.001 & 73.91 \\
  100,000 & 623  &193,753  &57.074 & 74.29\\
  80,000 & 547 &149,331  &57.776  & 74.03 \\
60,000  & 470  &110,215  & 57.990 &72.75 \\
 40,000  & 376  & 70,500  & 59.306 & 73.34\\
  20,000 & 245  & 29,890  & 61.288 & 71.72\\
  10,000 & 152  & 11,476 & 62.182 & 72.59 \\
  5,000 & 110 &  5,995  & 59.783 & 71.79\\
  1,000 & 45 &  990 & 65.556 & 76.77\\
    \hline
    \end{tabular}
  \caption{Results of experiment 2 for Database 1.}
\end{table}
\end{center}

\begin{center}
\begin{table}[bhtp]
    \begin{tabular}{|r||r|r||r|r|}
    \hline
    $N$&
    $\# E_d$&
    $\# B_{d_1,d_2}$&
    $\% \sha=\square$ & \% RE $\equiv$ rk $(2)$ 
    \\
    \hline
  100 & 6,087 & 18,522,741  & 50.694 &84.14 \\
  90 & 4,959 &  12,293,361  & 50.821 &83.66 \\
80  & 3,931 & 7,724,415   & 50.941 & 83.32\\
70  & 2,987 & 4,459,591   & 51.235 &82.51 \\
 60  & 2,203 & 2,425,503  & 51.461 & 82.00 \\
 50 & 1,547 & 1,195,831  &52.211 & 80.85\\
  40 & 979 & 478,731  & 52.764 & 79.92\\
  30 & 555 & 153,735  & 54.157 & 77.12\\
  20  & 255 & 32,385  &56.384  &77.11 \\
  10 & 63 &  1,953  & 67.179 & 74.04\\
    \hline
    \end{tabular}
  \caption{Results of experiment 2 for Database 2.}
\end{table}
\end{center}

The two ways we ordered the elliptic curves, via conductor or via height, are natural ways of ordering elliptic curves. It is conjectured that densities obtained concerning these orderings agree. In both cases the densities seem to exist and are around 0.5. This is in contrast to the results of Poonen and Stoll \cite{PS}, who showed that for genus $2$ curves the density of the non-square Jacobians is about $0.13$ and this density tends to zero, as the genus goes to infinity.

We end by giving some heuristics why we expect the density to be $50\%$. We expect that for a random pair $(d_1=u_1/v_1,d_2=u_2/v_2)$ in $\Q^* \times \Q^*$ the global factor is a square for $50\%$ of the abelian surfaces and that the local factor is a square for $50\%$ of them, too. We also expect these distributions to be independent. Using the $18,522,741$ pairs obtained from the second database, we get numerical evidence for the independence, as illustrated in the following table.

\begin{center}
    \begin{tabular}{|c|c|c|}
    \hline
    $N=100$   & global quotient $=\square$ & global quotient $\neq \square$\\
    \hline
    local quotient $= \square$ & $26.08\%$  & $25.26\%$  \\
    \hline
    local quotient $\neq \square$ & $24.04\%$ & $24.61\%$ \\
    \hline
    \end{tabular}
\end{center}

Recall that the exponent of the local quotient equals $-\# (T_1\cup T_2)+\# (U_1\cap U_2)$, hence one could prove the expected densities for the local quotient by showing that the probability that the set $(T_1\cup T_2)$ has an even number of elements is independent of the probability that the set $(U_1\cap U_2)$ has an even number of elements. The corresponding numerical result for Database 2 is gathered in the following table.

\begin{center}
    \begin{tabular}{|c|c|c|}
    \hline
    $N=100$   & $\# (U_1\cap U_2) \equiv 0 \ (2)$  & $\# (U_1\cap U_2) \equiv 1 \ (2)$\\
    \hline
    $\# (T_1\cup T_2)\equiv 0 \ (2)$ & $46.71\%$  & $1.80\%$  \\
    \hline
    $\# (T_1\cup T_2) \equiv 1 \ (2)$ & $49.55\%$ & $1.95\%$ \\
    \hline
    \end{tabular}
\end{center}

The global quotient is harder to control. The exponent of the torsion quotient equals $3$ on a density $1$ subset of the pairs $(d_1,d_2)$, see \cite[Proposition 4.6]{Keil}. The results of Tables 3-6 suggest that the squareness of the regular quotient and hence the squareness of the global quotient is not independent of the parity of the rank. If both ranks of the elliptic curves $E_1$ and $E_2$ are equal to $0$, hence are even, the regulator quotient equals $1$, hence is a square. If one elliptic curve is of rank $0$ and the other is of rank $1$, then the regulator quotient is a non-square if and only if $\coker \eta_\Q$ is trivial modulo torsion, where $\eta$ is the usual isogeny belonging to the elliptic curve of rank $1$. In Database 2 we have the following situation. For the rank $1$ curves it happens in about $91.2\%$ of the cases that $\eta_\Q$ is surjective on the free part. In case both ranks are equal to $1$, the regulator quotient is a square in about $80.9\%$ of the cases. For the complete second database we get that the parity of the exponent of the regulator quotient agrees with the parity of the rank in $84.14\%$. If we consider only all the elliptic curves of rank $\leq 1$, then we have that for abelian surfaces $B_{d_1,d_2}$ of even rank the regulator quotient is a square in about $88.2\%$ of the cases, and for abelian surfaces $B_{d_1,d_2}$ of odd rank the regulator quotient is a non-square in about $91.0\%$ of the cases, which together gives $89.6\%$ of agreement. We have the following situation for the complete Database 2.

\begin{center}
    \begin{tabular}{|c|c|c|}
    \hline
    $N=100$   & $\textnormal{rk}(B_{d_1,d_2}) \equiv 0 \ (2)$  & $\textnormal{rk}(B_{d_1,d_2}) \equiv 1 \ (2)$\\
    \hline
    regulator quotient $= \square$ & $42.067\%$  & $7.931\%$  \\
    \hline
    regulator quotient $\neq \square$ & $7.927\%$ & $42.075\%$ \\
    \hline
    \end{tabular}
\end{center}

In contrast to the global quotient, the squareness of the local quotient seems to be independent of the parity of the rank of the abelian surfaces. Again we give the numerical results for Database 2.

\begin{center}
    \begin{tabular}{|c|c|c|}
    \hline
    $N=100$   & $\textnormal{rk}(B_{d_1,d_2}) \equiv 0 \ (2)$  & $\textnormal{rk}(B_{d_1,d_2}) \equiv 1 \ (2)$\\
    \hline
    local quotient $= \square$ & $25.670\%$  & $25.675\%$  \\
    \hline
    local quotient $\neq \square$ & $24.324\%$ & $24.331\%$ \\
    \hline
    \end{tabular}
\end{center}

\bibliographystyle{plain}
        \bibliography{on_the_density_of_sha_stefan_keil}

\end{document}